%\documentclass[preprint,12pt]{elsarticle}

% arxiv style
\documentclass{article}
\pdfoutput=1
\usepackage{fullpage}

\usepackage{graphicx}
\usepackage{hyperref}

\usepackage[utf8]{inputenc}

\usepackage[T1]{fontenc}
\usepackage{fullpage}

\usepackage{amsmath,amssymb,amsthm}
\usepackage{algorithm}
\usepackage{algorithmic}

\newtheorem{proposition}{Proposition}
\newtheorem{theorem}{Theorem}
\newtheorem{lemma}{Lemma}

\DeclareMathOperator{\prox}{prox}

\DeclareMathOperator{\dist}{dist}

\author{Olivier Fercoq}
\title{Proximal gradient descent on the smoothed duality gap to solve saddle point problems}

%\affiliation{organization={LTCI, Télécom Paris, Institut Polytechnique de Paris},%Department and Organization
%	addressline={19 place Marguerite Perey}, 
%	city={Palaiseau},
%	postcode={91120}, 
%	country={France}}
%\journal{EURO Journal on Computational Optimization}

\begin{document}

\maketitle

\begin{abstract}
In this paper, we minimize the self-centered smoothed gap, a recently introduced optimality measure, in order to solve convex-concave saddle point problems.
The self-centered smoothed gap can be computed as the sum of a convex, possibly nonsmooth function and a smooth weakly convex function. Although it is not convex, we propose an algorithm that minimizes this quantity, effectively reducing convex-concave saddle point problems to a minimization problem. Its worst case complexity
is comparable to the one of the restarted and averaged primal dual hybrid gradient method, and the algorithm enjoys linear convergence in favorable cases.
\end{abstract}

%\begin{keyword}
%Saddle point problems \sep Proximal gradient method \sep Smoothed duality gap
%\end{keyword}

\section{Introduction}

In his celebrated paper ``Smooth minimization of non-smooth functions'' \cite{nesterov2005smooth}, Yurii Nesterov showed that a large class of Lipschitz continuous functions can be minimized much more efficiently than when using subgradient descent. The algorithm relies on a smoothing of the nonsmooth function at stake which is computable using the proximal operator of a simple elementary function. This work was later extended in \cite{tran2018smooth} to tackle non-Lipschitz functions, with a special focus on constraints. This was made possible by the introduction of the smoothed duality gap and the study of the relations between a small smoothed duality gap and feasibility and optimality gaps. Coordinate descent versions of these algorithms have also been proposed in~\cite{fercoq2017smooth,alacaoglu2017smooth}.

Yet, despite optimal worst case performance for the search of saddle points on convex-concave problems of the form $\min_x \max_y f(x) + \langle Ax, y\rangle - g^*(y)$, their practical performance on simple problems is disappointing. This comes from the fact that the convergence is governed by a pre-defined smoothing parameter sequence, which is set according to the worst case~\cite{tran2020adaptive}. 

In this work, we show how to overcome this drawback
by considering the Self-Centered Smoothed Duality Gap (SC-SDG). This quantity was introduced in~\cite{fercoq2023quadratic} as a computable proxy for the smoothed duality gap centered at a saddle point. It was then realized in~\cite{walwil2025primal} that it is in fact an optimality measure, and even a precise and versatile one, when compared to the Karush-Kuhn-Tucker error and the projected duality gap, which are used commonly to terminate primal-dual algorithms. SC-SDG has the very desirable property of taking value 0 at saddle points, whatever smoothing parameter we take. We propose to minimize it using the proximal gradient and accelerated proximal gradient methods. One difficulty 
is that even if the original problem is convex-concave, SC-SDG is a weakly convex function. Nevertheless, we can control the level of weak convexity and
by choosing the smoothing sequence properly, we can prove convergence to a global minimum, which has value 0.

Numerical experiments show that this worst case result does not reflect the simpler cases: our algorithms enjoy a performance comparable to PDHG \cite{chambolle2011first} and RAPDHG~\cite{fercoq2023quadratic,applegate2023faster} in a small linear program and a larger second order cone program on which we tested them.

\paragraph{Other related works} This work belongs to a long series of works on primal-dual algorithms for the resolution of convex-concave saddle point problems. We present here a very quick historical review that emphasizes the fundamental concepts rather than the technical subtleties.

% My plan
%Gradient descent-ascent or Arrow Hurwicz \cite{arrow1959studies} -> use gradients only, restrictive conditions for convergence
%
%Extragradient \cite{korpelevich1976extragradient} -> fix for convergence, 2 gradients per iteration
%-- also \cite{popov1980modification}
%
%
%Gradient ascent \cite{arrow1959studies} and proximal gradient on the dual problem \cite{hestenes1969multiplier,powell1978algorithms} -> good: bases on an maximization problem - bad: each iteration amounts to solving an unconstrained minimization problem. Inexactness not always easy to set 
%
%
%ADMM \cite{gabay1976dual} -> splitting algorithm in 2 parts that simplifies the inner minimization problems
%
%
%Chambolle-Pock \cite{chambolle2011first} -> full splitting between $f$, $g$ and $A$. Equivalent to ADMM up to adding variables \cite{oconnor2020equivalence}

%The development of optimization algorithms has been driven by the need to solve increasingly complex constrained and saddle-point problems. 
One of the foundational methods is the gradient descent-ascent algorithm introduced by Arrow and Hurwicz~\cite{arrow1959studies}. This method applies gradient descent to the primal variables and gradient ascent to the dual variables. While elegant in its simplicity (it relies only on first-order information), its convergence is guaranteed only under strong assumptions such as strict convexity-concavity and small step sizes, limiting its practical applicability.

To address these convergence limitations, Korpelevich~\cite{korpelevich1976extragradient} proposed the extragradient method. This technique evaluates the gradient twice per iteration (first to estimate a better search direction and then to make the actual update), which improves the stability and convergence for a broader class of variational inequalities. Around the same time, Popov developed an alternative fix, modifying the update rule to use implicit information in a single projected step, offering another viable path to convergence.

Another line of work focused on dual methods. Uzawa~\cite{arrow1959studies} introduced gradient ascent on the dual problem, while Hestenes~\cite{hestenes1969multiplier} and Powell~\cite{powell1978algorithms} developed the Augmented Lagrangian Method (ALM), which can be seen as the proximal point method in the dual~\cite{rockafellar1976augmented}. These approaches are appealing because they operate over dual variables, transforming saddle point problems into unconstrained dual maximization problems. However, their main drawback lies in each iteration requiring the solution of an unconstrained minimization subproblem. Ensuring the accuracy of these inner solutions (so-called inexactness) can be challenging and computationally expensive.

Then, Gabay and Mercier~\cite{gabay1976dual} introduced the Alternating Direction Method of Multipliers (ADMM). ADMM decomposes problems into smaller subproblems that are easier to solve, by alternating updates between primal and dual variables while enforcing consistency via augmented Lagrangians. This splitting structure simplifies the inner minimization steps and enhances scalability.

More recently, Chambolle and Pock~\cite{chambolle2011first} proposed Primal-Dual Hybrid Gradient (PDHG) that fully splits the contributions of the objective terms $f$, $g$, and the linear operator $A$. This algorithm generalizes and clarifies earlier methods, with modern interpretations revealing its equivalence to ADMM after suitable variable changes, as shown in~\cite{oconnor2020equivalence}. The Chambolle-Pock method is notable for being simple, efficient, and widely applicable, especially in imaging and machine learning problems.

The promise of this work is to merge the benefits of the augmented Lagrangian method and primal-dual hybrid gradient method: an underlying minimization problem with a principled merit function together with simple iterations that can be performed exactly.

\section{Problem}

This goal of this paper is to present novel algorithms for the resolution of convex-concave saddle point problems with linear coupling:
\begin{align*}
\min_{x \in \mathbb R^n} \max_{y \in \mathbb R^m} f(x) + \langle Ax, y \rangle - g^*(y)
\end{align*}
where $f$ and $g$ are lower-semicontinuous convex functions whose proximal operator is easy to compute, also called proximable functions, and $A$ is a linear operator. Here, $g^*$ is the Fenchel conjugate of $g$.
We will also denote $\mathcal X = \mathbb R^n$, $\mathcal Y = \mathbb R^m$ and $\mathcal Z = \mathcal X \times \mathcal Y$.
We assume that the set of saddle points is not empty. We will denote it by $\mathcal Z_*$.

We shall manipulate the primal vector $x$ and the dual vector $y$ together within a concatenated primal-dual vector $z = (x,y) \in \mathcal Z = \mathcal X \times \mathcal Y$. We will then denote $F(z) = f(x) + g^*(y)$ and $M(z) = (-A^\top y, Ax)$.
The smoothed gap is the function defined, for a couple of positive parameters $(\beta_x, \beta_y)$ and a center $\dot z \in \mathcal Z$, by
\begin{multline*}
\bar G_{\beta_x, \beta_y}(z, \dot z) := \sup_{z'=(x', y') \in \mathcal Z} f(x) + \langle Ax, y' \rangle - g^*(y') - \frac{\beta_y}{2}\|y' - \dot y\|^2 \\ - f(x') - \langle Ax', y\rangle + g^*(y) - \frac{\beta_x}{2}\|x' - \dot x\|^2.
\end{multline*}
When $\beta_x = \beta_y = 0$, we recover the usual definition of the duality gap. For conciseness, we shall denote $\beta = (\beta_x, \beta_y) \in \mathbb R^2_+$.
In this paper, we shall consider the self-centered smoothed gap, that is the function $G_{\beta}(z)= \bar G_{\beta_x, \beta_y}(z, z)$:
\begin{align*}
G_{\beta}(z) :=  F(z) + \sup_{z'} \;\langle Mz, z'\rangle - F(z') - \frac{1}{2} \|z - z'\|^2_\beta \;.
\end{align*}
It will be convenient to denote $F_{\beta, M}^*(z) := \sup_{z'} \;\langle Mz, z'\rangle - F(z') - \frac{1}{2} \|z - z'\|^2_\beta$. We will see in the next section that $F^*_{\beta, M}$ is a smooth approximation of $F^* \circ M$.

\section{Basic properties}

The next result shows that the self-centered smoothed duality gap is an optimality measure, that is a nonnegative computable quantity that is 0 if and only if we are at a saddle point.
\begin{proposition}[Prop. 32 in \cite{fercoq2023quadratic}]
For all $\beta \geq 0$, $G_\beta(z) \geq 0$. \\ Moreover $z \in \mathcal Z_* \Leftrightarrow G_\beta(z) = 0$.
\end{proposition}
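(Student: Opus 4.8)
The plan is to handle the nonnegativity of $G_\beta$, the implication $z\in\mathcal Z_*\Rightarrow G_\beta(z)=0$, and its converse as three separate arguments, all resting on one elementary observation: $M$ is skew-adjoint, so that
$$\langle Mz,z\rangle = \langle -A^\top y,x\rangle + \langle Ax,y\rangle = -\langle y,Ax\rangle+\langle Ax,y\rangle = 0\qquad\text{for every }z=(x,y).$$
I abbreviate $h_z(z'):=\langle Mz,z'\rangle - F(z') - \tfrac12\|z-z'\|_\beta^2$, the concave (linear minus convex minus convex quadratic) map whose supremum is $F^*_{\beta,M}(z)$, so that $G_\beta(z)=F(z)+\sup_{z'}h_z(z')$. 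For nonnegativity I would simply lower-bound the supremum by its value at $z'=z$: since $\|z-z\|_\beta=0$ and $\langle Mz,z\rangle=0$ we get $h_z(z)=-F(z)$, hence $\sup_{z'}h_z(z')\ge -F(z)$ and $G_\beta(z)\ge F(z)-F(z)=0$. This holds for every $\beta\ge 0$.

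For the forward implication, suppose $z_*=(x_*,y_*)\in\mathcal Z_*$ and write $L(x,y)=f(x)+\langle Ax,y\rangle-g^*(y)$ for the Lagrangian. The idea is to prove the \emph{unregularized} bound $\langle Mz_*,z'\rangle - F(z') \le -F(z_*)$ for all $z'$, after which subtracting the penalty only decreases the left-hand side. Expanding gives $\langle Mz_*,z'\rangle - F(z') = \bigl(\langle Ax_*,y'\rangle - g^*(y')\bigr) - \bigl(f(x') + \langle Ax',y_*\rangle\bigr)$, and the two saddle-point inequalities $L(x_*,y')\le L(x_*,y_*)$ and $L(x_*,y_*)\le L(x',y_*)$ bound the first bracket above by $\langle Ax_*,y_*\rangle - g^*(y_*)$ and the second bracket below by $f(x_*)+\langle Ax_*,y_*\rangle$; subtracting leaves exactly $-F(z_*)$. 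Taking the supremum over $z'$ yields $\sup_{z'}h_{z_*}(z')\le -F(z_*)$, which together with the reverse inequality from the nonnegativity step forces $G_\beta(z_*)=0$.

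The converse is where the real work lies, and is the step I expect to be the main obstacle. Assuming $G_\beta(z)=0$ and using $h_z(z)=-F(z)$, the hypothesis reads $\sup_{z'}h_z(z')=h_z(z)$, i.e. $z$ is a global maximizer of the concave function $h_z$. I would then invoke the first-order optimality condition $0\in\partial_{z'}h_z(z)$, where the superdifferential splits exactly—because the linear and quadratic terms are smooth—as $\partial_{z'}h_z(z')=Mz-\partial F(z')+\nabla_{z'}\bigl(-\tfrac12\|z-z'\|_\beta^2\bigr)$. The crucial point of evaluating at $z'=z$ is that the gradient of the penalty vanishes there \emph{regardless of} $\beta$, so the condition collapses to $Mz\in\partial F(z)$; this is precisely why the characterization is insensitive to the smoothing parameters.

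It remains to recognize $Mz\in\partial F(z)$ as membership in $\mathcal Z_*$. Since $F$ is separable, $\partial F(z)=\partial f(x)\times\partial g^*(y)$, so the inclusion unpacks into $-A^\top y\in\partial f(x)$ and $Ax\in\partial g^*(y)$, that is $0\in\partial_x L(x,y)$ and $0\in\partial_y(-L)(x,y)$, which are exactly the stationarity conditions defining a saddle point. The two delicate points to verify carefully are the exactness of the sum rule for the superdifferential of $h_z$ (guaranteed here by smoothness of the linear and quadratic summands, so no constraint qualification is needed) and this final equivalence between $Mz\in\partial F(z)$ and $z\in\mathcal Z_*$.
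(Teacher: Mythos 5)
Your proof is correct. Note that this paper never proves the proposition itself: it imports it as Prop.~32 of \cite{fercoq2023quadratic}, so there is no internal argument to compare yours against, and your write-up effectively fills that gap. All three steps check out: skew-symmetry of $M$ gives $h_z(z)=-F(z)$ and hence $G_\beta(z)\ge 0$; the two saddle-point inequalities bound the two brackets and give $\sup_{z'}\,\langle Mz_*,z'\rangle-F(z')\le -F(z_*)$, hence $G_\beta(z_*)=0$ for every $\beta\ge 0$ since the quadratic penalty only decreases the supremum; and Fermat's rule combined with the exact sum rule (valid because the linear and quadratic summands are finite-valued and smooth, so no constraint qualification is needed) yields $Mz\in\partial F(z)$, which by separability of $F$ unpacks to $-A^\top y\in\partial f(x)$ and $Ax\in\partial g^*(y)$, i.e.\ the saddle-point conditions. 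Two points you should state explicitly to make the argument airtight: first, properness of $f$ and $g^*$ (implicit in the standing assumption $\mathcal Z_*\neq\emptyset$) is what guarantees $F^*_{\beta,M}(z)>-\infty$, so that $G_\beta(z)=0$ really forces $F(z)<+\infty$ and lets you read the hypothesis as ``$z$ attains the supremum of the concave function $h_z$''; second, in the nonnegativity step the cancellation $F(z)-F(z)$ only makes sense for $z\in\dom F$, but off $\dom F$ one gets $G_\beta(z)=+\infty\ge 0$ trivially, again by properness.
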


We now give some fundamental properties of $F^*_{\beta,M}$.
\begin{proposition}
The function $F^*_{\beta, M}$ is 1-weakly convex in the norm $\|\cdot\|_\beta$ and differentiable.
If we denote $\bar z_\beta(z) = \prox_{\beta^{-1}, F} (z + \frac 1 \beta M z)= \arg\min_{z'} F(z') - \langle Mz, z'\rangle + \frac{1}{2}\|z' - z\|_\beta^2$,
then the gradient of $F^*_{\beta, M}$ is given by $\nabla F^*_{\beta,M}(z) = -M \bar z_\beta(z) + \beta (\bar z_\beta(z)-z)$ and, in the norm $\|\cdot\|_{\gamma^{-1}}$ where $\gamma = (\gamma_x, \gamma_y)$, it is $L_{\beta,\gamma}$-Lipschitz for 
\begin{equation*}
L_{\beta,\gamma} =  \max(\beta_x \gamma_x + \frac{\gamma_x}{\beta_y}\|A\|^2, \beta_y\gamma_y + \frac{\gamma_y}{\beta_x}\|A\|^2)+\max(\beta_x\gamma_x, \beta_y\gamma_y)\;.
\end{equation*}
\end{proposition}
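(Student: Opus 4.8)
The plan is to dispatch weak convexity and differentiability first, since they follow from the variational structure, and to reserve the Lipschitz estimate for last, as it is where the exact constant is forged. Throughout I write $\beta z := (\beta_x x, \beta_y y)$ for the diagonal scaling and $\langle z,z'\rangle_\beta := \beta_x\langle x,x'\rangle + \beta_y\langle y,y'\rangle$, and I use that $M$ is skew-symmetric, $M^\top = -M$, which is immediate from $M(z)=(-A^\top y, Ax)$.

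For weak convexity I would expand $\tfrac12\|z-z'\|_\beta^2 = \tfrac12\|z\|_\beta^2 - \langle z,z'\rangle_\beta + \tfrac12\|z'\|_\beta^2$ and pull the $z'$-independent term $-\tfrac12\|z\|_\beta^2$ out of the supremum. Using $\langle Mz,z'\rangle + \langle z,z'\rangle_\beta = \langle z,\ M^\top z' + \beta z'\rangle$, this gives
\[
F^*_{\beta,M}(z) + \tfrac12\|z\|_\beta^2 = \sup_{z'}\Big[\ \langle z,\ M^\top z' + \beta z'\rangle - F(z') - \tfrac12\|z'\|_\beta^2\ \Big],
\]
a pointwise supremum of functions that are \emph{affine} in $z$. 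Hence the left-hand side is convex, which is exactly the statement that $F^*_{\beta,M}$ is $1$-weakly convex in $\|\cdot\|_\beta$.

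For differentiability and the gradient formula I would read the same identity as $F^*_{\beta,M}(z)+\tfrac12\|z\|_\beta^2 = \psi^*\big((M+\beta)z\big)$, where $\psi(z') := F(z') + \tfrac12\|z'\|_\beta^2$ is strongly convex. Then $\psi^*$ is finite and differentiable with $\nabla\psi^*(w) = \arg\max_{z'}\langle w,z'\rangle - \psi(z')$, and one checks directly that $\nabla\psi^*\big((M+\beta)z\big) = \bar z_\beta(z)$ since the two optimization problems differ only by the constant $\tfrac12\|z\|_\beta^2$. By the chain rule $F^*_{\beta,M}$ is differentiable with $\nabla\big(F^*_{\beta,M} + \tfrac12\|\cdot\|_\beta^2\big)(z) = (M^\top+\beta)\bar z_\beta(z)$; subtracting $\nabla(\tfrac12\|\cdot\|_\beta^2) = \beta z$ and using $M^\top = -M$ gives the announced $\nabla F^*_{\beta,M}(z) = -M\bar z_\beta(z) + \beta(\bar z_\beta(z)-z)$.

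For the Lipschitz estimate I would start from the optimality condition of the prox, $\beta z + Mz \in \partial F(\bar z_\beta(z)) + \beta\bar z_\beta(z)$. Writing $\bar z_i = \bar z_\beta(z_i)$, $\Delta z = z_1-z_2$, $\Delta\bar z = \bar z_1-\bar z_2$ and subtracting the two conditions, monotonicity of $\partial F$ yields the master inequality $\|\Delta\bar z\|_\beta^2 \le \langle (M+\beta)\Delta z,\ \Delta\bar z\rangle$. The gradient formula gives $\nabla F^*_{\beta,M}(z_1)-\nabla F^*_{\beta,M}(z_2) = (\beta - M)\Delta\bar z - \beta\Delta z$, so by the triangle inequality in $\|\cdot\|_\gamma$ the term $\beta\Delta z$ contributes exactly $\max(\beta_x\gamma_x,\beta_y\gamma_y)\|\Delta z\|_{\gamma^{-1}}$ — the second maximum in $L_{\beta,\gamma}$ — and it remains to bound $\|(\beta-M)\Delta\bar z\|_\gamma$ by the first maximum times $\|\Delta z\|_{\gamma^{-1}}$.

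The hard part will be this last bound, where the precise constant appears. I would expand the master inequality blockwise, bound the skew cross-terms $\langle A^\top\Delta y,\Delta\bar x\rangle$ and $\langle A\Delta x,\Delta\bar y\rangle$ by $\|A\|$ times products of norms, and split them with Young's inequality so that the factors $\tfrac{\|A\|^2}{\beta_y}$ and $\tfrac{\|A\|^2}{\beta_x}$ appear against $\beta_x\|\Delta\bar x\|^2$ and $\beta_y\|\Delta\bar y\|^2$; this produces componentwise control of $\Delta\bar x$ and $\Delta\bar y$ by $\Delta x,\Delta y$. Feeding these into $\|(\beta-M)\Delta\bar z\|_\gamma^2 = \gamma_x\|\beta_x\Delta\bar x + A^\top\Delta\bar y\|^2 + \gamma_y\|\beta_y\Delta\bar y - A\Delta\bar x\|^2$ and balancing the weights is the bookkeeping that yields $\beta_x\gamma_x + \tfrac{\gamma_x}{\beta_y}\|A\|^2$ and $\beta_y\gamma_y + \tfrac{\gamma_y}{\beta_x}\|A\|^2$. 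The main risk is choosing the Young's-inequality weights so that the two blocks combine into a clean maximum rather than a loose sum; a useful consistency check is the case $F=0$, where $\bar z_\beta(z) = (\mathrm{Id}+\beta^{-1}M)z$, the map $\nabla F^*_{\beta,M}$ is linear with ``Hessian'' $-M\beta^{-1}M = \big(\tfrac{1}{\beta_y}A^\top A,\ \tfrac{1}{\beta_x}AA^\top\big)$ (block-diagonal), and its $\gamma^{-1}\to\gamma$ operator norm is exactly $\max(\tfrac{\gamma_x}{\beta_y}\|A\|^2, \tfrac{\gamma_y}{\beta_x}\|A\|^2)$, confirming the $\|A\|^2/\beta$ contributions.
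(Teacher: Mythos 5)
Your treatment of weak convexity and of the gradient formula is correct and is essentially the paper's own argument: the paper also writes $F^*_{\beta,M}(z) + \tfrac12\|z\|_\beta^2$ as a supremum of functions affine in $z$, and identifies it as Nesterov's smoothing of $F^*$ precomposed with $M+\beta. I$, which yields differentiability and $\nabla F^*_{\beta,M}(z) = -M\bar z_\beta(z) + \beta(\bar z_\beta(z)-z)$; your re-derivation through the conjugate of the strongly convex $\psi = F + \tfrac12\|\cdot\|_\beta^2$ and the chain rule is the same argument with the citation unfolded. Splitting off the $\beta\Delta z$ term to obtain the additive $\max(\beta_x\gamma_x,\beta_y\gamma_y)$ also matches the paper's structure, and your prox-monotonicity ``master inequality'' is equivalent (via Cauchy--Schwarz in the $\beta$-scalar product) to the nonexpansiveness bound $\|\Delta\bar z\|_\beta \leq \|(M+\beta. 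I)\Delta z\|_{\beta^{-1}}$ that the paper uses.

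The gap is in the Lipschitz estimate, exactly where you flagged the risk, and it is twofold. First, the master inequality couples the two blocks and only yields the aggregate bound $\|\Delta\bar z\|_\beta \leq \|(M+\beta. I)\Delta z\|_{\beta^{-1}}$; it does not give the ``componentwise control of $\Delta\bar x$ and $\Delta\bar y$'' your plan relies on. Second, in the expansion $\|(\beta. I-M)\Delta\bar z\|_\gamma^2 = \gamma_x\|\beta_x\Delta\bar x + A^\top\Delta\bar y\|^2 + \gamma_y\|\beta_y\Delta\bar y - A\Delta\bar x\|^2$ the cross terms do \emph{not} cancel: they equal $2(\gamma_x\beta_x-\gamma_y\beta_y)\langle A\Delta\bar x,\Delta\bar y\rangle$, and a generic Young split of them gives a constant strictly looser than the claimed maximum; one can check that only the precise weight $2\langle A\Delta\bar x,\Delta\bar y\rangle \leq \tfrac1{\beta_y}\|A\Delta\bar x\|^2 + \beta_y\|\Delta\bar y\|^2$ (when $\gamma_x\beta_x>\gamma_y\beta_y$, the symmetric choice otherwise) folds everything into $\max(\beta_x\gamma_x+\tfrac{\gamma_x}{\beta_y}\|A\|^2,\,\beta_y\gamma_y+\tfrac{\gamma_y}{\beta_x}\|A\|^2)\,\|\Delta\bar z\|_\beta^2$, and your proposal does not identify it. The paper sidesteps this bookkeeping entirely: it bounds the composition by the product of operator norms $\|-M+\beta. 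I\|_{\gamma,\beta}\,\|M+\beta. I\|_{\beta^{-1},\gamma^{-1}}$, shows by diagonal rescaling that these two norms are equal, so the product is $\|B\|^2 = \lambda_{\max}(BB^\top)$ with $B = \gamma^{1/2}.(-M+\beta. I).\beta^{-1/2}$, and computes $BB^\top$ exactly: by skew-symmetry of $M$ its off-diagonal blocks vanish, leaving the block-diagonal matrix with blocks $\beta_x\gamma_x I + \tfrac{\gamma_x}{\beta_y}A^\top A$ and $\beta_y\gamma_y I + \tfrac{\gamma_y}{\beta_x}AA^\top$, whose largest eigenvalue is the stated maximum. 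Note that the cancellation happens in $BB^\top$ and not in $B^\top B$ (whose off-diagonal blocks are proportional to $\gamma_x\beta_x - \gamma_y\beta_y$); your termwise expansion is in effect trying to bound the quadratic form of $B^\top B$, so it must reproduce the spectral identity $\lambda_{\max}(B^\top B) = \lambda_{\max}(BB^\top)$ by hand --- that is the missing idea, and without it (or the magic Young weight above) your approach lands on a sum-type constant rather than the claimed max.
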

\begin{proof}
	We have
\begin{align*}
F^*_{\beta, M}(z) + \frac{1}{2}\|z\|_\beta^2 = c(z) := \sup_{z'} \;\langle Mz, z'\rangle - F(z') + \langle z, z'\rangle_\beta - \frac{1}{2} \|z'\|_\beta^2 \;.
\end{align*}
Hence $F^*_{\beta, M}$ can be written as the difference between a convex function and $\frac{1}{2}\|\cdot\|^2_\beta$, which proves the weak convexity.

The convex function $c$ defined above is the usual smoothing of nonsmooth functions \cite{nesterov2005smooth} applied to the function $F^*$ and the linear operator $M + \beta . I$, where the notation $\beta.I$ stands for $(\beta.I)z = (\beta_x x, \beta_y y)$.
Hence, it is differentiable, its gradient is $\nabla c(z) = (M^\top + \beta . I)\bar z_\beta(z)=(-M + \beta. I)\bar z_\beta(z)$ because $M$ is skew-symmetric. Moreover, $\nabla c$ is Lipschitz continuous with a Lipschitz constant using norms $\|\cdot\|_\gamma$ and $\|\cdot\|_{\gamma^{-1}}$ given by
\begin{align*}
\|\nabla c(z_1) &- \nabla c(z_2)\|_\gamma = \|(-M + \beta. I)\bar z_\beta(z_1)-(-M + \beta. I)\bar z_\beta(z_2)\|_\gamma \\
&\leq \|-M + \beta. I\|_{\gamma,\beta} \|\bar z_\beta(z_1) - \bar z_\beta(z_2)\|_\beta \\
& \leq \|-M + \beta. I\|_{\gamma,\beta} \|z_1+\beta^{-1}.Mz_1 - z_2-\beta^{-1}.Mz_2\|_\beta \\
& \leq \|-M + \beta. I\|_{\gamma,\beta} \|(\beta.I+M)(z_1-z_2)\|_{\beta^{-1}} \\
& \leq \|-M + \beta. I\|_{\gamma,\beta}\|M + \beta. I\|_{\beta^{-1},\gamma^{-1}}\|z_1 - z_2\|_{\gamma^{-1}}
\end{align*}
Now
\begin{align*}
\|-M+\beta.I\|_{\gamma,\beta} &= \sup_{z \neq 0} \frac{\|(-M+\beta.I)z\|_\gamma}{\|z\|_\beta} = \sup_{z' \neq 0} \frac{\|\gamma^{1/2}.(-M+\beta.I).\beta^{-1/2}.z'\|}{\|z\|} \\
&= \|\gamma^{1/2}.(-M+\beta.I).\beta^{-1/2}\| = \|\beta^{-1/2}.(M+\beta.I).\gamma^{1/2}\| = \|M + \beta. I\|_{\beta^{-1},\gamma^{-1}}
\end{align*}
so that the Lipschitz constant of $\nabla c$ is given by the largest eigenvalue of the matrix
\begin{equation*}
\begin{bmatrix}
\sqrt{\beta_x\gamma_x} I & \sqrt{\frac{\gamma_x}{\beta_y}}A^\top \\
-\sqrt{\frac{\gamma_y}{\beta_x}}A & \sqrt{\gamma_y\beta_y}I
\end{bmatrix} \times
\begin{bmatrix}
\sqrt{\beta_x\gamma_x} I & -\sqrt{\frac{\gamma_y}{\beta_x}}A^\top \\
\sqrt{\frac{\gamma_x}{\beta_y}}A & \sqrt{\gamma_y\beta_y}I
\end{bmatrix} 
= \begin{bmatrix}
\beta_x\gamma_x I + \frac{\gamma_x}{\beta_y}A^\top A & 0 \\
0 & \gamma_y\beta_y I + \frac{\gamma_y}{\beta_x}A A^\top
\end{bmatrix}
\end{equation*}
We obtain $L(\nabla c) = \max(\beta_x \gamma_x + \frac{\gamma_x}{\beta_y}\|A\|^2, \beta_y\gamma_y + \frac{\gamma_y}{\beta_x}\|A\|^2)$.
Hence, since $F^*_{\beta, M} = c(z) + \frac 1 2 \|z\|^2_\beta$, the Lipschitz constant of $\nabla F^*_{\beta, M}$ in the norm $\|\cdot\|_\gamma$ is given by $L_{\beta,\gamma} = \|M + \beta I\|^2_{\beta^{-1}, \gamma^{-1}} + \max(\beta_x \gamma_x, \beta_y\gamma_y)$.
\end{proof}

\section{Proximal gradient algorithm}

Our main idea is to run the proximal gradient algorithm on 
$G_\beta = F + F^*_{\beta, M}$. However, since this is not a convex function, the algorithm may converge to stationary points which are not minimizers.
To circumvent this issue, we consider a decreasing sequence of smoothing parameters $\beta_k$, so that the objective function gets closer and closer to a convex function.

\begin{algorithm}
\begin{align*}
&z_0 \in \mathcal Z \\
&\forall k \in \mathbb N: \\
& \qquad z_{k+1} = \prox_{\gamma_k F}(z_k - \gamma_k \nabla F_{\beta_k, M}^*(z_k))
\end{align*}
\caption{Proximal gradient descent on the self-centered smoothed gap with continuation} \label{alg:prox_grad_smoothed_gap}
\end{algorithm}

\begin{lemma}
	\label{lem:compare_gbetas}
Let $\beta$ and $\beta'$ be two couples of smoothing parameters. Then for all $z$,
\begin{equation*}
G_{\beta'}(z) \geq \big(2 - \max\big(\frac{\beta'_x}{\beta_{x}}, \frac{\beta'_y}{\beta_y}\big)\big) G_\beta(z) \;.
\end{equation*}
\end{lemma}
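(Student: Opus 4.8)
The plan is to exploit the variational definition of $G_\beta$ and to compare the two gaps through a single, well-chosen test point. Let $u=\bar z_\beta(z)$ be the maximizer defining $G_\beta(z)$, so that $G_\beta(z)=F(z)+\langle Mz,u\rangle-F(u)-\tfrac12\|z-u\|_\beta^2$. Since the same $u$ is a feasible (though generally suboptimal) candidate in the supremum defining $G_{\beta'}(z)$, we immediately get $G_{\beta'}(z)\ge F(z)+\langle Mz,u\rangle-F(u)-\tfrac12\|z-u\|_{\beta'}^2$. Subtracting the two lines cancels the common part $F(z)+\langle Mz,u\rangle-F(u)$ and leaves $G_{\beta'}(z)-G_\beta(z)\ge\tfrac12\big(\|z-u\|_\beta^2-\|z-u\|_{\beta'}^2\big)$. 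Writing $\lambda=\max(\beta'_x/\beta_x,\beta'_y/\beta_y)$ we have $\beta'_x\le\lambda\beta_x$ and $\beta'_y\le\lambda\beta_y$, hence $\|z-u\|_{\beta'}^2\le\lambda\|z-u\|_\beta^2$, and therefore $G_{\beta'}(z)-G_\beta(z)\ge\tfrac{1-\lambda}2\|z-u\|_\beta^2$.

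The key step is then to relate the quadratic term $\tfrac12\|z-u\|_\beta^2$ back to $G_\beta(z)$ itself. I would obtain this from the first-order optimality condition characterizing $u=\bar z_\beta(z)$, namely $Mz+\beta.(z-u)\in\partial F(u)$. Convexity of $F$ gives $F(z)\ge F(u)+\langle Mz+\beta.(z-u),z-u\rangle=F(u)+\langle Mz,z-u\rangle+\|z-u\|_\beta^2$. Substituting this into $F(z)+\langle Mz,u\rangle-F(u)$ and using the skew-symmetry identity $\langle Mz,z\rangle=0$ yields $F(z)+\langle Mz,u\rangle-F(u)\ge\|z-u\|_\beta^2$, so that $G_\beta(z)=\big(F(z)+\langle Mz,u\rangle-F(u)\big)-\tfrac12\|z-u\|_\beta^2\ge\tfrac12\|z-u\|_\beta^2$. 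This two-sided control $0\le\tfrac12\|z-u\|_\beta^2\le G_\beta(z)$ is the crux of the argument, and combining it with the previous display gives $G_{\beta'}(z)\ge G_\beta(z)+(1-\lambda)\,\tfrac12\|z-u\|_\beta^2$.

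The main obstacle, and the step I would scrutinize most, is the direction in which the estimate $\tfrac12\|z-u\|_\beta^2\le G_\beta(z)$ must be applied, which is governed by the sign of $1-\lambda$. When $\lambda\ge1$ (at least one block is smoothed at least as much under $\beta'$ as under $\beta$) the factor $1-\lambda$ is nonpositive, so multiplying the inequality $\tfrac12\|z-u\|_\beta^2\le G_\beta(z)$ by it reverses the sign and produces $G_{\beta'}(z)\ge G_\beta(z)+(1-\lambda)G_\beta(z)=(2-\lambda)G_\beta(z)$, closing the argument cleanly. The opposite regime $\lambda<1$, where the claimed factor $2-\lambda$ exceeds $1$ and a genuinely large lower bound is asserted, is the delicate one: the single test point $u$ no longer captures enough, because the maximizer of $G_{\beta'}$ can drift appreciably away from that of $G_\beta$, and one must quantify this displacement. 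To attack it I would pass to the scalar profile $t\mapsto G_{t\beta}(z)$, which is convex and, by the same optimality estimate, satisfies $G_{t\beta}(z)\ge\tfrac t2\|z-\bar z_{t\beta}(z)\|_\beta^2$; reconciling this first-order information with the convexity of the profile is where I expect the real difficulty of the lemma to concentrate.
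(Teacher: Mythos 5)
Your completed case is correct, and it is essentially the paper's own proof. Plugging the maximizer $u=\bar z_\beta(z)$ of the $\beta$-problem into the supremum defining $G_{\beta'}(z)$ is exactly the paper's ``convexity in $\beta$ plus gradient formula'' step (a supremum of functions affine in $\beta$ is convex in $\beta$, and its first-order lower bound at $\beta$ is precisely the value of the $\beta'$-objective at $u$), and your subgradient derivation of the key two-sided control $\tfrac12\|z-u\|_\beta^2\le G_\beta(z)$ is an equivalent route to the paper's strong-concavity argument (the paper writes $G_\beta(z)\ge 0+\tfrac12\|z-\bar z_\beta(z)\|^2_\beta$, obtained from $\beta$-strong concavity of the inner objective, whose value at $z'=z$ is $0$ by skew-symmetry). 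So for $\lambda:=\max(\beta'_x/\beta_x,\beta'_y/\beta_y)\ge 1$ your argument and the paper's coincide; note also that for $\lambda\ge 2$ the claim is trivial since $G_{\beta'}(z)\ge 0\ge(2-\lambda)G_\beta(z)$.

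Your suspicion about the regime $\lambda<1$ is not just well-founded: in that regime the statement is \emph{false}, so the difficulty you anticipated cannot be resolved by any refinement of the argument. Take $n=m=1$, $f(x)=x^2/2$, $g^*(y)=y^2/2$, $A=0$ (so $M=0$), and $z=(1,0)$. A direct computation gives
\begin{equation*}
G_\beta(z)=\frac{1}{2(1+\beta_x)}\,,
\end{equation*}
so with $\beta=(1,1)$ and $\beta'=(\tfrac12,\tfrac12)$, i.e.\ $\lambda=\tfrac12$, one gets $G_{\beta'}(z)=\tfrac13$ while $(2-\lambda)G_\beta(z)=\tfrac32\cdot\tfrac14=\tfrac38>\tfrac13$. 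The obstruction is exactly the one you isolated: closing the case $\lambda<1$ would require $\tfrac12\|z-\bar z_\beta(z)\|_\beta^2\ge G_\beta(z)$, which is the reverse of the true inequality (and fails strictly whenever $F$ has curvature, as in this example). The paper's proof carries the same hidden restriction --- its final step multiplies $\tfrac12\|z-\bar z_\beta(z)\|^2_\beta\le G_\beta(z)$ by the nonpositive factor $1-\lambda$, which is only legitimate when $\lambda\ge1$ --- and, importantly, the lemma is only ever invoked in the paper with $\beta'=\beta_k$ and $\beta=\beta_{k+1}$ for a decreasing sequence, i.e.\ with $\lambda=\beta_k/\beta_{k+1}\ge1$. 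So your proof covers every case in which the lemma is true and used; the correct reading of the statement is that it requires the hypothesis $\max(\beta'_x/\beta_x,\beta'_y/\beta_y)\ge1$, and your last paragraph should be replaced by the counterexample rather than by further analysis of the scalar profile.
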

\begin{proof}
Because $G_\beta(z) = F(z) + \sup_{z'=(x',y')} \langle Mz, z'\rangle - F(z') - \frac{\beta_x}{2} \|x - x'\|^2 - \frac{\beta_y}{2} \|y - y'\|^2$, it is a convex function of $\beta$ and its gradient is given by $\frac{\partial G_\beta(z)}{\partial \beta_x} = -\frac{1}{2} \|x - \bar x_\beta(z)\|^2$ and $\frac{\partial G_\beta(z)}{\partial \beta_y} = -\frac{1}{2} \|y - \bar y_\beta(z)\|^2$. Thus we have
\begin{align*}
G_{\beta'}(z) &\geq G_{\beta}(z) - \frac{\beta_x' - \beta_x}{2}\|x - \bar x_\beta(z)\|^2 -\frac{\beta_y' - \beta_y}{2}\|y - \bar y_\beta(z)\|^2 
\end{align*}
Moreover,
\begin{align*}
& G_{\beta}(z) = \sup_{z'} F(z) - F(z') + \langle M z, z'\rangle -\frac{1}{2}\|z - z'\|^2_\beta \geq 0 + \frac{1}{2}\|z - \bar z_{\beta}(z)\|_\beta^2
\end{align*}
so 
\begin{align*}
G_{\beta'}(z_{k+1}) \geq \Big(2 - \max\big(\frac{\beta'_x}{\beta_{x}}, \frac{\beta'_y}{\beta_y}\big) \Big)G_{\beta}(z) \;.
\end{align*}
\end{proof}

\begin{theorem}
Let $p' > 0$ and $b = \frac{1}{ (3/2)^{2}-1} \approx 4.45$. If $\beta_{x,k} = \beta_{y,k}= \frac{\|M\|}{(k+b)^{1/2}} \sqrt{\frac{p'}{b+p'}}$ and $\gamma_{x,k} = \gamma_{y,k} = \frac{1}{L_{\beta_k, 1}} = \frac{\beta_{x,k}}{\|M\|^2 + 2 \beta_{x,k}^2}$ then the sequence $(z_k)$ generated by Algorithm~\ref{alg:prox_grad_smoothed_gap} satisfies
\begin{align*}
G_{\beta_{k+1}}(z_{k+1}) \leq \frac{e^1 (\|M\|^2+2\beta_0^2) \|z_0-z_*\|^2}{2 \beta_k \big(c_1(k+b+1)^{1-p'} -  c_2 \ln(k+b) + c_3\big)} \in O\Big(\frac{e^1 \|M\| \|z_0-z_*\|^2}{2\sqrt{p'}  k^{(1/2-p')}}\Big)
\end{align*}
where $c_1  \approx 1$, $c_2 \approx 0.49$ and $c_3 \approx -6.76$.
\end{theorem}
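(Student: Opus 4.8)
The plan is to run the textbook proximal gradient analysis on the weakly convex objective $G_{\beta_k}=F+F^*_{\beta_k,M}$ at each iteration, and then to glue the iterations together through the continuation mechanism of Lemma~\ref{lem:compare_gbetas}. First I would derive a one-step inequality. Writing $r_k=\|z_k-z_*\|^2$ for a fixed saddle point $z_*\in\mathcal Z_*$, I would combine three ingredients: the $\frac{1}{\gamma_k}$-strong convexity of the proximal subproblem defining $z_{k+1}$, which gives a three-point inequality valid for any comparison point; the descent lemma coming from the $L_{\beta_k,1}$-smoothness of $F^*_{\beta_k,M}$ proved in the previous proposition; and the $1$-weak convexity of $F^*_{\beta_k,M}$ in $\|\cdot\|_{\beta_k}$ from that same proposition. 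The step size $\gamma_k=1/L_{\beta_k,1}$ is exactly what makes the $\|z_{k+1}-z_k\|^2$ terms cancel. Taking the comparison point to be $z_*$ and using $G_{\beta_k}(z_*)=0$ from the optimality-measure proposition, I expect to obtain
\begin{equation*}
G_{\beta_k}(z_{k+1})+\tfrac{L_{\beta_k,1}}{2}\,r_{k+1}\le\tfrac{\beta_k+L_{\beta_k,1}}{2}\,r_k,\qquad L_{\beta_k,1}=\tfrac{\|M\|^2+2\beta_k^2}{\beta_k}.
\end{equation*}

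This is not a pure descent because of the excess factor $\theta_k=1+\beta_k/L_{\beta_k,1}>1$ multiplying $r_k$; the continuation is what compensates for it. Taking the comparison point $z_k$ instead of $z_*$ in the same argument gives the descent property $G_{\beta_k}(z_{k+1})\le G_{\beta_k}(z_k)$. I would then invoke Lemma~\ref{lem:compare_gbetas} with $\beta'=\beta_k$, $\beta=\beta_{k+1}$ at the point $z_{k+1}$ to pass from the old smoothing to the new one, namely $G_{\beta_{k+1}}(z_{k+1})\le\frac{1}{2-\beta_k/\beta_{k+1}}G_{\beta_k}(z_{k+1})$. Here the role of $b$ becomes visible: its definition through $(3/2)^2$ is tuned so that $\beta_k/\beta_{k+1}=\sqrt{(k+1+b)/(k+b)}\le 3/2$ for every $k$, which keeps $2-\beta_k/\beta_{k+1}\ge 1/2>0$, so the Lemma~\ref{lem:compare_gbetas} factor stays well defined and bounded. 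Combining this with the descent property yields the multiplicative control $G_{\beta_{k+1}}(z_{k+1})\le\frac{1}{2-\beta_k/\beta_{k+1}}\,G_{\beta_k}(z_k)$ along the iterates.

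Next I would telescope the one-step inequality with carefully chosen weights. Setting $A_0=1$ and $A_k=\prod_{j=1}^k\frac{L_{\beta_{j-1},1}}{\beta_j+L_{\beta_j,1}}$ makes the distance terms cancel exactly across consecutive steps, so that summing $A_k$ times the one-step inequality collapses to
\begin{equation*}
\sum_{k=0}^{K}A_k\,G_{\beta_k}(z_{k+1})\;\le\;\tfrac{\beta_0+L_{\beta_0,1}}{2}\,\|z_0-z_*\|^2 .
\end{equation*}
To extract a bound on the single quantity $G_{\beta_{K+1}}(z_{K+1})$, I would lower bound each summand $G_{\beta_k}(z_{k+1})$ by a multiple of $G_{\beta_{K+1}}(z_{K+1})$: the Lemma~\ref{lem:compare_gbetas} step gives $G_{\beta_k}(z_{k+1})\ge(2-\beta_k/\beta_{k+1})\,G_{\beta_{k+1}}(z_{k+1})$, and unrolling the multiplicative chain of the previous paragraph propagates this down to index $K+1$ at the cost of a product of the near-one factors $\tfrac{1}{2-\beta_i/\beta_{i+1}}$. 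This produces $G_{\beta_{K+1}}(z_{K+1})\le\frac{\frac12(\beta_0+L_{\beta_0,1})\|z_0-z_*\|^2}{\sum_{k=0}^K w_k}$ for explicit weights $w_k$ built from the $A_k$ and the Lemma~\ref{lem:compare_gbetas} products, and after simplifying $\beta_0+L_{\beta_0,1}$ against $L_{\beta_0,1}=(\|M\|^2+2\beta_0^2)/\beta_0$ one recovers the $\|M\|^2+2\beta_0^2$ in the numerator.

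The hard part, and the last step, will be to evaluate the weighted sum $\sum_{k=0}^K w_k$ in closed enough form to read off the rate. With $\beta_k=\|M\|\sqrt{p'/(b+p')}\,(k+b)^{-1/2}$, each factor entering $A_k$ and the Lemma~\ref{lem:compare_gbetas} products is of the form $1+O\!\big(\tfrac{1}{k+b}\big)$, so two things must be controlled simultaneously: the accumulated products must be shown to stay bounded, which is where the constant $e^1$ should come from via an $\exp\big(\sum_k O(\tfrac{1}{k+b})\big)$ estimate, and the remaining weights must be compared with an integral of the form $\int (j+b)^{-p'}\,dj$ to generate the leading term $c_1(k+b+1)^{1-p'}$ together with the logarithmic correction $-c_2\ln(k+b)+c_3$. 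Pinning down the numerical constants $c_1\approx1$, $c_2\approx0.49$, $c_3\approx-6.76$ and verifying that the integral-comparison error is absorbed into them is the delicate, calculation-heavy portion; the resulting $\frac{1}{\beta_k}\sim(k+b)^{1/2}$ prefactor against the $(k+b)^{1-p'}$ denominator is what finally yields the announced $O\big(k^{-(1/2-p')}\big)$ decay.
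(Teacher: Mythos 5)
Your first three steps are sound and essentially reproduce the paper's skeleton: the one-step inequality $G_{\beta_k}(z_{k+1})+\frac{L_{\beta_k,1}}{2}r_{k+1}\le\frac{\beta_k+L_{\beta_k,1}}{2}r_k$, the descent property, the exact-cancellation weights $A_k$ (which are the paper's ``multiply by $\gamma_k$ and iterate'' in disguise), and the chain $G_{\beta_k}(z_{k+1})\ge\prod_{j=k}^K\big(2-\beta_j/\beta_{j+1}\big)G_{\beta_{K+1}}(z_{K+1})$ obtained from Lemma~\ref{lem:compare_gbetas} plus descent. The genuine gap is in your final step, and it is not a computational detail but the crux of the theorem. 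The amplification hidden in your $A_k$ is $\prod_{j\le k}(1+\gamma_j\beta_j)$ with $\gamma_j\beta_j=\beta_j^2/(\|M\|^2+2\beta_j^2)\approx p'/\big((b+p')(j+b)\big)$. This is a harmonic-type series: $\sum_j\gamma_j\beta_j$ diverges logarithmically, so the product grows like $(k+b)^{p'/(b+p')}$ and is \emph{not} bounded. Your sentence ``the accumulated products must be shown to stay bounded, which is where the constant $e^1$ should come from via an $\exp(\sum_k O(1/(k+b)))$ estimate'' is exactly the step that fails: with $\beta_k\sim k^{-1/2}$ the parameters sit precisely at the borderline where that exponential diverges. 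Similarly, $\prod_{j=k}^K(2-\beta_j/\beta_{j+1})\approx\sqrt{(k+b)/(K+b)}$ decays polynomially in $K$; it is not a product of removable near-one factors. Finally, nothing in your weights decays like $(j+b)^{-p'}$, so the comparison ``with an integral of the form $\int(j+b)^{-p'}dj$'' has no object to apply to: the exponent $p'$ in the claimed leading term $c_1(k+b+1)^{1-p'}$ cannot be produced by your construction.

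The missing idea is the $\delta_k$-mixing that the paper performs before telescoping: it takes the convex combination, with weights $\delta_k=(1+k/b)^{-p'}$ and $1-\delta_k$, of the $z_*$-comparison inequality and of the descent inequality $G_{\beta_k}(z_{k+1})\le G_{\beta_k}(z_k)$. This replaces the per-step amplification $1+\gamma_k\beta_k$ by $1+\gamma_k\beta_k\delta_k\sim(k+b)^{-(1+p')}$, which is summable with sum bounded by $\ell=\frac{\beta_0^2}{\|M\|^2}\big(1+\frac{b}{p'}\big)$; the choice $\beta_0=\|M\|\sqrt{p'/(b+p')}$ gives $\ell=1$, and that is the sole source of the $e^1$. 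The price is that after telescoping the coefficient of $G_{\beta_l}(z_l)$ is only of order $\delta_l-p/(l+b-1)$, and summing the $\delta_l$ is what yields $c_1(k+b+1)^{1-p'}-c_2\ln(k+b)+c_3$: the mixing is the one and only mechanism producing the exponent $p'$. Your route is not beyond repair: keeping $\delta_k\equiv1$ (as you implicitly do) and honestly propagating the unbounded factor $(k+b)^{p'/(b+p')}$ through the sum would give a bound of order $K^{-(1/2-p'/(b+p'))}$ with different constants --- same flavor, in fact a marginally better exponent --- but that is not the stated inequality, and your write-up as it stands establishes neither that bound nor the theorem's.
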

\begin{proof}
Since $\gamma_{x,k} = \gamma_{y,k}$, we will denote $\gamma_k = \gamma_{x,k}$ and similarly $\beta_k = \beta_{x,k}$. Let us consider $z_* \in \mathcal Z_*$. Note that for all $\beta$, $G_\beta(z^*) = 0$. We use successively the Taylor-Lagrange inequality, the 3-point inequality for the proximal operator, the weak convexity of $F^*_{\beta_k, M}$,
the condition $\gamma_{k} \leq \frac{1}{L_{\beta_k,1}}$ and $G_{\beta_k}(z^*)=0$.
\begin{align*}
G_{\beta_k}(z_{k+1}) &= F(z_{k+1}) + F^*_{\beta_k, M}(z_{k+1})\\
& \leq F(z_{k+1}) +  F^*_{\beta_k, M}(z_{k}) + \langle \nabla F^*_{\beta_k, M}(z_k), z_{k+1} - z_k\rangle + \frac{L_{\beta_k,1}}{2} \|z_{k+1} - z_k\|^2 \\
& \leq F(z^*) + F^*_{\beta_k, M}(z_{k}) + \langle \nabla F^*_{\beta_k, M}(z_k), z_{*} - z_k\rangle + \frac{1}{2\gamma_k}\|z_k - z_*\|^2 -\frac{1}{2\gamma_k}\|z_{k+1} - z_*\|^2 \\ & \qquad \qquad +\big(\frac{L_{\beta_k,1}}{2}-\frac{1}{2\gamma_k}\big) \|z_{k+1} - z_k\|^2 \\
& \leq F(z_*) + F^*_{\beta_k, M}(z_*)  + \big(\frac{\beta_k}{2} +\frac{1}{2\gamma_k}\big)\|z_k - z_*\|^2 -\frac{1}{2\gamma_k}\|z_{k+1} - z_*\|^2 \\
& \leq \big(\frac{\beta_k}{2} +\frac{1}{2\gamma_k}\big)\|z_k - z_*\|^2 -\frac{1}{2\gamma_k}\|z_{k+1} - z_*\|^2
\end{align*}
From $G_{\beta_k}(z_{k+1}) \leq F(z_{k+1}) +  F^*_{\beta_k, M}(z_{k}) + \langle \nabla F^*_{\beta_k, M}(z_k), z_{k+1} - z_k\rangle + \frac{L_{\beta_k, 1}}{2} \|z_{k+1} - z_k\|^2$, we can also get
\begin{align*}
G_{\beta_k}(z_{k+1}) \leq G_{\beta_k}(z_k) - \frac{1}{2 \gamma_k}\|z_{k+1} - z_k\|^2 \leq G_{\beta_k}(z_k)
\end{align*}
Hence, combining both inequality with factors $\delta_k$ and $1-\delta_k \in [0,1]$, we get
\begin{align*}
G_{\beta_k}(z_{k+1}) \leq (1-\delta_k) G_{\beta_k}(z_k)+\big(\frac{\delta_k\beta_k}{2} +\frac{1}{2\gamma_k}\big)\|z_k - z_*\|^2 -\frac{1}{2\gamma_k}\|z_{k+1} - z_*\|^2
\end{align*}
By Lemma \ref{lem:compare_gbetas},
$G_{\beta_k}(z_{k+1}) \geq \big(2 - \frac{\beta_k}{\beta_{k+1}} \big)G_{\beta_{k+1}}(z_{k+1}).$

We obtain
\begin{align*}
\big(2 - \frac{\beta_k}{\beta_{k+1}} \big)G_{\beta_{k+1}}(z_{k+1}) \leq (1-\delta_k) G_{\beta_k}(z_k)+\big(\frac{\delta_k\beta_k}{2} +\frac{1}{2\gamma_k}\big)\|z_k - z_*\|^2 -\frac{1}{2\gamma_k}\|z_{k+1} - z_*\|^2
\end{align*}

We multiply both sides by $\gamma_k$ and
we iterate for $k \in \{0, \ldots, K-1\}$:
\begin{align}
\frac 12 \|z_{k+1} - z^*\|^2 &\leq \frac 12 \big(\prod_{l=0}^k (1+\gamma_l \beta_l\delta_l)\big) \|z_0 - z_*\|^2 \notag\\
&\qquad\qquad- \sum_{l=0}^k \big(\prod_{j=l+1}^{k} (1+\gamma_j \beta_j \delta_j)\big) \gamma_l \big((2-\frac{\beta_l}{\beta_{l+1}}) G_{\beta_{l+1}}(z_{l+1}) - (1-\delta_l) G_{\beta_{l}}(z_{l}) \big)\notag\\
& = \frac 12 \big(\prod_{l=0}^k (1+\gamma_l \beta_l\delta_l)\big) \|z_0 - z_*\|^2 + \big(\prod_{j=1}^{k} (1+\gamma_j \beta_j \delta_j)\big)(1-\delta_0) G_{\beta_{0}}(z_{0})\notag\\
& \qquad\qquad - \sum_{l=1}^k \big(\prod_{j=l}^{k} (1+\gamma_j \beta_j \delta_j)\big) \big(\gamma_{l-1} (2-\frac{\beta_{l-1}}{\beta_{l}}) G_{\beta_{l}}(z_{l}) - \frac{\gamma_l (1-\delta_l)}{1+\gamma_l\beta_l\delta_l} G_{\beta_{l}}(z_{l}) \big) \notag \\
& \qquad\qquad- \gamma_k(2-\frac{\beta_{k}}{\beta_{k+1}}) G_{\beta_{k+1}}(z_{k+1})
 \;.\label{sum_smoothed_gap_in_prox_grad}
\end{align}
% we sum for $k \in \{0, \ldots, K-1\}$:
%\begin{align}
%\sum_{k=0}^{K-1}\gamma_k G_{\beta_k}(z_{k+1}) \leq \frac 12 \|z_0 - z_*\|^2 - \frac 12 \|z_K - z_*\|^2  + \sum_{k=0}^{K-1}\frac {\gamma_k\beta_k}{2}\|z_k - z_*\|^2 \; . \label{sum_smoothed_gap_in_prox_grad}
%\end{align}
%Let us show how we can choose $\beta_k$ and $\gamma_k$ such that $\sum_{k=0}^{+\infty} \beta_k \gamma_k < +\infty$.
Because $M$ is skew-symmetric, $\|M+\beta_k I\|^2= \|M\|^2 + \beta_k^2$. Thus, $\gamma_k = \frac{1}{\frac{1}{\beta_k}\|M + \beta_kI\|^2 + \beta_k} = \frac{\beta_k}{\|M\|^2 + 2\beta_k^2} \leq \frac{\beta_k}{\|M\|^2}$.
Recall that for $q > 1$ and $b>0$, $\sum_{k=0}^{+\infty} \frac{1}{(k + b)^q} \leq b^{-q} + \sum_{k=1}^{+\infty} \int_{k+b-1}^{k+b} \frac{1}{x^q} dx \leq b^{-q} + \frac{b^{1-q}}{q-1}$.
Choosing $\beta_k = \frac{\beta_0}{(k/b+1)^{p}} = \frac{\beta_0 b^p}{(k+b)^{p}}$ and $\delta_k = \frac{1}{(k/b+1)^{p'}}$ yields, as soon as $2p+p'>1$,
\begin{align*}
\sum_{k=0}^{+\infty}\gamma_k \beta_k \delta_k\leq \sum_{k=0}^{+\infty} \frac{\beta_k^2\delta_k}{\|M\|^2} = \frac{\beta_0^2 b^{2p+p'}}{\|M\|^2}\sum_{k=0}^{+\infty} \frac{1}{(k+b)^{2p+p'}} \leq \frac{\beta_0^2}{\|M\|^2} + \frac{\beta_0^2}{\|M\|^2}\frac{b}{2p + p' - 1} := \ell\;.
\end{align*}
%We also need to upper bound $\|z_k - z_*\|$.
%Since $G_{\beta_k}(z_{k+1}) \geq 0$, we have 
%\begin{align*}
%\|z_{k+1} - z_*\|^2 \leq (1 + \beta_k \gamma_k) \|z_k - z_*\|^2 \leq \big(\prod_{l=0}^{k} (1 + \beta_l \gamma_l)  \big) \|z_0 - z_*\|^2
%\end{align*}
Hence, $\ln(\prod_{l=0}^{k} (1 + \beta_l \gamma_l\delta_l) ) \leq \sum_{l=0}^k \beta_l \gamma_l\delta_l \leq \ell$, so $\prod_{l=0}^{k} (1 + \beta_l \gamma_l\delta_l) \leq \exp(\ell)$.
Note also that our choice $\delta_k = \frac{1}{(k/b+1)^{p'}}$ implies that $\delta_0 = 1$.
Because $p \leq 1$, by concavity of the function $(x \mapsto (1+x)^p)$, we have
\begin{align*}
-\gamma_{l-1} (2-\frac{\beta_{l-1}}{\beta_{l}}) + \frac{\gamma_l (1-\delta_l)}{1+\gamma_l\beta_l\delta_l} &\leq  \gamma_{l-1}(-2+\frac{\beta_{l-1}}{\beta_l}+1-\delta_l) \\
&= \gamma_{l-1}\Big(-1+\frac{(l+b)^p}{(l+b-1)^{p}}-\frac{1}{(l/b+1)^{p'}}\Big)\\
&= \gamma_{l-1}\Big(-1+\big(1 + \frac{1}{l+b-1}\big)^p-\frac{1}{(l/b+1)^{p'}}\Big)\\
&\leq \gamma_{l-1}\Big(\frac{p}{l+b-1}-\frac{1}{(l/b+1)^{p'}}\Big)
\end{align*}
This quantity is negative as soon as 
$p' \leq 1$ and $1 \geq \frac{p}{b-1}$.
Combining all these results in~\eqref{sum_smoothed_gap_in_prox_grad}, we get
\begin{align*}
\sum_{l=1}^k \gamma_{l-1} \Big(\frac{1}{(l/b+1)^{p'}}-\frac{p}{l+b-1}\Big)G_{\beta_l}(z_{l}) + \gamma_k(2-\frac{\beta_{k}}{\beta_{k+1}}) G_{\beta_{k+1}}(z_{k+1}) \leq \frac{\exp(\ell)}{2}\|z_0 - z_*\|^2 \;.
\end{align*}

%Now, 
%\begin{align*}
%\|z_{k+1} - \bar z_{\beta_{k+1}}(z_{k+1})\| &\leq \|z_{k+1} - z_*\|
%+ \|\bar z_{\beta_{k+1}}(z_{k+1}) - z_*\| = \|z_{k+1} - z_*\|
%+ \|\bar z_{\beta_{k+1}}(z_{k+1}) - \bar z_{\beta_{k+1}}(z_*)\| \\
%&\leq  \|z_{k+1} - z_*\ + \|(\frac{1}{\beta_{k+1}}M + I)(z_{k+1} - z_*)\| \leq (1 + \frac{1}{\beta_{k+1}}\|M\|+1)\|z_{k+1} - z_*\|
%\end{align*}
%We obtain 
%\begin{align*}
%G_{\beta_k}(z_{k+1}) \geq G_{\beta_{k+1}}(z_{k+1}) - \frac{\beta_k - \beta_{k+1}}{2}\big(\frac{\|M\|}{\beta_{k+1}} + 2 \big)^2 \exp(\ell)\|z_0 - z_*\|^2
%\end{align*}
%Other possibility:

We shall gather all the terms of the left hand side by using
\begin{align*}
G_{\beta_l}(z_{l})& \geq \big(2 - \frac{\beta_l}{\beta_{l+1}} \big) G_{\beta_{l+1}}(z_{l+1}) \geq \prod_{j=l}^{K} \big(2 - \frac{\beta_{j}}{\beta_{j+1}} \big) G_{\beta_{K+1}}(z_{K+1})
\end{align*}

Let us denote $\phi(x) =  \ln \big(2-(1+x)^p\big)$, so that for $\beta_j = \frac{\beta_0}{(j/b+1)^p}$, we have the equality
$\ln\Big(\prod_{j=l}^{K} \big(2 - \frac{\beta_{j}}{\beta_{j+1}}\big)\Big) = \sum_{j=l}^K \phi(1/(j+b))$.

For all $x < 2^{1/p} - 1$,
\begin{align*}
&\phi'(x) = \frac{-p(1+x)^{p-1}}{2 - (1+x)^p} \\
&\phi''(x) = \frac{p(1-p)(1+x)^{p-2}}{2 - (1+x)^p} - \frac{p^2(1+x)^{2p-2}}{(2-(1+x)^p)^2} \\
&\phi'''(x) = \frac{p(1-p)(p-2)(1+x)^{p-3}}{2 - (1+x)^p} - \frac{p^2(2p-2)(1+x)^{2p-3}}{(2-(1+x)^p)^2} + 2 \frac{p^3(1+x)^{3p-3}}{(2-(1+x)^p)^3}
\end{align*}
When $p = \frac 12$ and $\phi'(0) = -p$, $\phi''(0) = 0$. Moreover, for $0 \leq x \leq (\frac{3}{2})^{1/p}-1$, $1\leq (1+x)^p \leq \frac{3}{2}$ and $|\phi'''(x)| \leq \frac{3}{8}\frac{1}{2-3/2} + \frac{1}{4}\frac{1}{(2-3/2)^2} + \frac{1}{4}\frac{1}{(2-3/2)^3} = \frac{3}{4} + 1 + 2 \leq 4$. Note that as soon as $b \geq \frac{1}{ (3/2)^{1/p}-1} \approx 4.45$, we have $\frac{1}{j+b} \leq (3/2)^{1/p}-1$ for all $j\in \mathbb N$. This implies that for all $x \in [0, (3/2)^{1/2}-1]$,
\begin{align*}
&\phi(x) \geq -\frac{1}{2} x - \frac{4}{6} x^3\\
& \sum_{j=l}^K \phi(1/(j+b)) \geq -\frac 12 \sum_{j=l}^K \frac{1}{j+b} - \frac{2}{3}\sum_{j=l}^K \frac{1}{(j+b)^3} \\
& \phantom{\sum_{j=l}^K \phi(1/(j+b))}\geq 
-\frac{1}{2}\Big(\ln(K+b) - \ln(l+b-1)\Big) - \frac{2}{3} \frac{(l+b-1)^{-2}}{2} \\
& \prod_{j=l}^K (2 - \frac{\beta_j}{\beta_{j+1}}) \geq \Big(\frac{l+b-1}{K+b}\Big)^{1/2} \times \exp(\frac{-1}{3(l+b-1)^2})\geq \exp(\frac{-1}{3b^2}) \frac{\beta_K}{\beta_{l-1}}
\end{align*}
where the last inequality is true as soon as $l \geq 1$. Thus, using also $\gamma_k \geq \frac{\beta_k}{\|M\|^2 + 2 \beta_0^2}$,
\begin{align*}
\frac{\exp(\ell)}{2}&\|z_0 - z_*\|^2 \geq \sum_{l=1}^k \gamma_{l-1} \Big(\frac{1}{(l/b+1)^{p'}}-\frac{p}{l+b-1}\Big)G_{\beta_l}(z_{l}) + \gamma_k(2-\frac{\beta_{k}}{\beta_{k+1}}) G_{\beta_{k+1}}(z_{k+1}) \\
&\geq \Big(\sum_{l=1}^k \gamma_{l-1} \exp(\frac{-1}{3b^2})\big(\frac{1}{(l/b+1)^{p'}}-\frac{p}{l+b-1}\big)\frac{\beta_k}{\beta_{l-1}} + \gamma_k(2-\frac{\beta_{k}}{\beta_{k+1}})\Big) G_{\beta_{k+1}}(z_{k+1}) \\
& \geq \Big(\frac{\exp(\frac{-1}{3b^2})}{\|M\|^2 + 2\beta_0^2}\sum_{l=1}^k \big(\frac{1}{(l/b+1)^{p'}}-\frac{p}{l+b-1}\big) \beta_k + \beta_k\frac{2-\beta_0/\beta_1}{\|M\|^2+2\beta_0^2}\Big) G_{\beta_{k+1}}(z_{k+1})
\end{align*}
Since $p'<1$,
\begin{align*}
\sum_{l=1}^k \frac{1}{(l/b+1)^{p'}} &= b^{p'} \sum_{l=1}^k \frac{1}{(l+b)^{p'}}\geq b^{p'} \sum_{l=1}^k \int_{l+b}^{l+b+1} \frac{1}{x^{p'}}dx = b^{p'} \int_{b+1}^{k+b+1}\frac{1}{x^{p'}}dx \\
&\geq \frac{b^{p'}}{1-p'} \Big((k+b+1)^{1-p'} - (b+1)^{1-p'} \Big)
\end{align*}
We also have
\begin{align*}
\sum_{l=1}^k \frac{1}{l+b-1} \leq \frac{1}{b-1} + \ln(k+b) - \ln(b-1)
\end{align*}
so that
\begin{align*}
\exp(\frac{-1}{3b^2})&\sum_{l=1}^k \big(\frac{1}{(l/b+1)^{p'}}-\frac{p}{l+b-1}\big) + (2-\beta_0/\beta_1) \\
&\geq \exp(\frac{-1}{3b^2}) \frac{b^{p'}}{1-p'} \Big((k+b+1)^{1-p'} - (b+1)^{1-p'} \Big) \\
& \qquad - \frac 12 \exp(\frac{-1}{3b^2}) \big(\frac{1}{b-1} + \ln(k+b) - \ln(b-1)\big)+ 2 - (b+1)^{1/2} \\
&=\exp(\frac{-1}{3b^2}) \frac{b^{p'}}{1-p'} (k+b+1)^{1-p'} - \frac 12 \exp(\frac{-1}{3b^2}) \ln(k+b) \\
&\qquad - \exp(\frac{-1}{3b^2}) \frac{b^{p'}}{1-p'}(b+1)^{1-p'}  - \frac 12 \exp(\frac{-1}{3b^2}) \big(\frac{1}{b-1}- \ln(b-1)\big) + 2 - (b+1)^{1/2} \\
& := c_1 (k+b+1)^{1-p'} - c_2 \ln(k+b) + c_3
\end{align*}
Finally,
\begin{align*}
G_{\beta_{k+1}}(z_{k+1}) \leq \frac{\exp(\ell) (\|M\|^2+2\beta_0^2) \|z_0-z_*\|^2}{2 \beta_k \big(c_1(k+b+1)^{1-p'} -  c_2 \ln(k+b) + c_3\big)}
\end{align*}
We shall choose $p=0.5$, $b=\frac{1}{ (3/2)^{1/p}-1}$ and $p'$ positive but small.

 Then $2 \beta_k \big(c_1(k+b+1)^{1-p'} -  c_2 \ln(k+b) + c_3\big) \sim_{k\to+\infty} 2 \beta_0 c_1 k^{1/2-p'}$ where $c_1 \approx 1$.

Since $\ell = \frac{\beta_0^2}{\|M\|^2}(1 + \frac{b}{p'})$, we should compensate for the smallness of $p'$ by smartly choosing $\beta_0$ to avoid an overwhelming constant when applying the exponential. Taking $\beta_0 = \|M\|\sqrt{\frac{p'}{b+p'}}$ leads to $\exp(\ell) = e^1$, which is reasonable.

With these considerations, we get the simplified rate estimate
\begin{align*}
G_{\beta_{k}}(z_{k}) \in O\Big(\frac{e^1 \|M\| \|z_0-z_*\|^2}{2\sqrt{p'}  k^{1/2-p'}}\Big) \;.
\end{align*}
\end{proof}

\section{Accelerated proximal gradient descent}

In this section, we propose to adapt the accelerated proximal gradient descent algorithm to the minimization of the self-centered smoothed gap. The algorithm is described in Algorithm~\ref{alg:acc_norestart}. 
In addition to the acceleration, we are going to write the proof using weighted norms to allow for different primal and dual step sizes.

\begin{algorithm}
	\begin{align*}
	\bar z_0 = \;&z_0 \in \mathcal Z \\
	\forall k \in &\;\mathbb N:\\
	& \hat z_{k} = (1-\theta_k) z_k + \theta_k\bar z_k \\
	& \bar z_{k+1} = \prox_{\frac{\gamma_k}{\theta_k}, F}\Big(\bar z_k - \frac{\gamma_k}{\theta_k}.\nabla F^*_{\beta_k, M}(\hat z_k) \Big)\\
	& z_{k+1} = (1-\theta_k)z_k + \theta_k \bar z_{k+1}
	\end{align*}
	\caption{Accelerated proximal gradient descent for the smoothed gap} \label{alg:acc_norestart}
\end{algorithm}

\begin{theorem}
	\label{thm:acc}
We consider the iterates of Algorithm~\ref{alg:acc_norestart}. Suppose that
$\theta_k = \frac{t}{k+t}$, $\beta_k = \frac{\beta_0 b}{k + b}$, 
$\gamma_{x,k} = \frac{\beta_{y,k}}{2\beta_{x,k}\beta_{y,k}+\|A\|^2}$ and $\gamma_{y,k} = \frac{\beta_{x,k}}{2\beta_{y,k}\beta_{x,k}+\|A\|^2}$.
where $t$, $b$ and $\beta_0$ satisfy
\begin{align*}
&b \geq t \geq 2 \\
&\bar c = \frac{\beta_{x,0} \beta_{y,0} b^2}{t \|M\|^2} < 1 
%\\& c =  \min(2b+1-2t-\bar c, \frac{(b-1)^2-(t-3)\bar c}{t}) > 0
\end{align*}
Then for all $K \geq 1$,
\begin{align*}
G_{\beta_{K}}(z_{K}) \leq \frac{b}{2}\frac{1}{(K+b-2)^{1-\bar c}}  \| z_0 - z^*\|^2_{\gamma_0^{-1}+\beta_0} \in O(K^{\bar c - 1})
\end{align*}
\end{theorem}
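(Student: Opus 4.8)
The plan is to run an accelerated (Lyapunov / estimate-sequence) analysis on the weakly convex objective $G_{\beta_k} = F + F^*_{\beta_k,M}$, treating $F^*_{\beta_k,M}$ as the smooth part, $F$ as the proximable part, and letting $\beta_k$ decrease so that the weak-convexity penalty is progressively defused. First I would derive a one-step inequality by combining four ingredients at index $k$: the descent lemma for $F^*_{\beta_k,M}$ at the extrapolated point $\hat z_k$ evaluated at $z_{k+1}$ (using $z_{k+1}-\hat z_k = \theta_k(\bar z_{k+1}-\bar z_k)$), the three-point inequality for the prox defining $\bar z_{k+1}$ (which is $\theta_k$-strongly convex in $\|\cdot\|_{\gamma_k^{-1}}$), the convexity of $F$ to split $F(z_{k+1}) \le (1-\theta_k)F(z_k) + \theta_k F(\bar z_{k+1})$, and the $1$-weak convexity of $F^*_{\beta_k,M}$ (Proposition~2) applied twice to transfer the linearizations from $\hat z_k$ to $z_k$ and to $z^*$. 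The crucial point is that the prescribed $\gamma_{x,k},\gamma_{y,k}$ make $L_{\beta_k,\gamma_k}=1$ \emph{exactly}, so the $\|\bar z_{k+1}-\bar z_k\|_{\gamma_k^{-1}}^2$ contributions cancel; using $G_{\beta_k}(z^*)=0$ I then expect
\begin{equation*}
G_{\beta_k}(z_{k+1}) \le (1-\theta_k)G_{\beta_k}(z_k) + E_k + \tfrac{\theta_k^2}{2}\|z^*-\bar z_k\|_{\gamma_k^{-1}}^2 - \tfrac{\theta_k^2}{2}\|z^*-\bar z_{k+1}\|_{\gamma_k^{-1}}^2,
\end{equation*}
with the weak-convexity residual $E_k = \tfrac{\theta_k}{2}\|z^*-\hat z_k\|_{\beta_k}^2 + \tfrac{1-\theta_k}{2}\|z_k - \hat z_k\|_{\beta_k}^2$.

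Next I would change the smoothing parameter on the left using Lemma~\ref{lem:compare_gbetas}: with $\beta_k = \beta_0 b/(k+b)$ one has $2 - \beta_k/\beta_{k+1} = (k+b-1)/(k+b)$, hence $G_{\beta_k}(z_{k+1}) \ge \tfrac{k+b-1}{k+b}G_{\beta_{k+1}}(z_{k+1})$. I would introduce accumulating weights $A_k$ solving $A_{k+1}\tfrac{k}{k+t} = A_k\tfrac{k+b-2}{k+b-1}$ (since $\theta_k = t/(k+t)$, the choice $\theta_0=1$ conveniently annihilates the $G_{\beta_0}(z_0)$ term), which grow like $A_k \asymp k^{t-1}$ and make the $G$-terms telescope. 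Multiplying the one-step inequality by $A_{k+1}$ and summing, the distance terms turn into an Abel sum with coefficients $c_k = A_{k+1}\tfrac{\theta_k^2}{2}\gamma_k^{-1} \asymp k^{t-2}$.

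The heart of the argument, and the main obstacle, is controlling $E_k$. Expanding $\hat z_k = (1-\theta_k)z_k + \theta_k\bar z_k$, the pieces carrying two powers of $\theta_k$ are of order $\theta_k^2\beta_k$ and are dominated by the momentum term because $\gamma_k^{-1}\ge\beta_k$ componentwise; but one piece, $\tfrac{\theta_k(1-\theta_k)}{2}\|z^*-z_k\|_{\beta_k}^2$, carries only a single power of $\theta_k$ and is of the same order as the main telescoping source, so it cannot simply be absorbed. I would handle it by tracking $\|z^*-z_k\|^2$ together with $\|z^*-\bar z_k\|^2$ (via $z^*-z_{k+1} = (1-\theta_k)(z^*-z_k)+\theta_k(z^*-\bar z_{k+1})$) and closing a self-referential, Gr\"onwall-type estimate. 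The relative strength of this feedback against the telescoping distance coefficient is $\approx \beta_k/(\theta_k\gamma_k^{-1}) \approx \bar c/k$, since $\gamma_k^{-1}\approx\|M\|^2/\beta_k$ and $\beta_{x,k}\beta_{y,k}(k+b)^2 = \beta_{x,0}\beta_{y,0}b^2$; consequently the distances grow at most like $\prod_{j\le k}(1+\bar c/j) \lesssim k^{\bar c}$ times $\|z_0-z^*\|^2$. This is precisely where the hypothesis $\bar c<1$ is used, while $b\ge t\ge 2$ serves to keep the weights $A_k$ and $c_k$ monotone with the correct signs so the Abel summation stays a genuine Lyapunov decrease plus a controlled source.

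Finally I would assemble the pieces: combining the Abel sum ($c_k \asymp k^{t-2}$) with the $O(k^{\bar c})$ growth of the distances and dividing by $A_K\tfrac{K+b-1}{K+b} \asymp K^{t-1}$ yields $G_{\beta_K}(z_K) \lesssim K^{\bar c-1}\|z_0-z^*\|^2$, matching the claimed order. Retaining explicit constants (rather than the $\asymp$ estimates) throughout the integral bounds and the Gr\"onwall product should reproduce the stated prefactor $\tfrac{b}{2}(K+b-2)^{-(1-\bar c)}$, and the norm $\|\cdot\|_{\gamma_0^{-1}+\beta_0}$ appears because the natural Lyapunov distance is measured in $\|\cdot\|_{\gamma_k^{-1}+\beta_k}$ (the momentum norm $\gamma_k^{-1}$ enlarged by the weak-convexity norm $\beta_k$), which at $k=0$ collapses to $\|\cdot\|_{\gamma_0^{-1}+\beta_0}$ since $\bar z_0 = z_0$. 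I expect the delicate bookkeeping to be the weak-convexity feedback step, i.e.\ making the self-consistent distance bound rigorous and pinning the accumulated factor to exactly $k^{\bar c}$.
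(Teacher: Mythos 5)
Your proposal is correct and follows essentially the same route as the paper's proof: the same Tseng-style one-step inequality with the identical expansion of the weak-convexity residual $E_k$, the same recognition that the term $\tfrac{\theta_k(1-\theta_k)}{2}\|z^*-z_k\|^2_{\beta_k}$ forces tracking $\|z_k-z^*\|^2_{\beta_k}$ alongside the momentum distance $\|\bar z_k-z^*\|^2_{\gamma_k^{-1}}$, and the same Gr\"onwall-type product of factors $1+\bar c/(k+c)$ (with $\bar c = \beta_{x,0}\beta_{y,0}b^2/(t\|A\|^2)$, coinciding with the paper's) accumulating to $K^{\bar c}$, which is exactly where $\bar c<1$ and $b\ge t\ge 2$ enter. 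The only difference is bookkeeping — you telescope with explicit weights $A_k$ whereas the paper contracts a three-term Lyapunov function $\mathcal L_k$ by a factor $\rho_k$ and bounds $\prod_k \rho_k$ — and these are equivalent formulations of the same argument.
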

\begin{proof}
The proof starts like \cite{tseng2008accelerated} but then includes the effect of weak convexity.
The second line uses convexity of $F$ and smoothness of $F^*_{\beta_k, M}$, the third line uses $z_{k+1} = \hat z_k + \theta_k (\bar z_{k+1} - \bar z_k)$, the forth line is due to the fact that $z_{k+1}$ is the result of a proximal operator, the fifth line uses $\theta_k \bar z_k = \hat z_k - (1-\theta_k)z_k = \theta_k \hat z_k + (1-\theta_k)(\hat z_k - z_k)$, the sixth line uses twice the $\beta_k$-weak convexity of $F^*_{\beta_k, M}$.
\begin{align}
G_{\beta_k}&(z_{k+1}) = F(z_{k+1}) + F^*_{\beta_k, M}(z_{k+1}) \notag\\
&\leq \theta_k F(\bar z_{k+1}) + (1-\theta_k) F(z_k) + F^*_{\beta_k, M}(\hat z_k) + \langle \nabla F^*_{\beta_k, M}(\hat z_k), z_{k+1} - \hat z_k\rangle + \frac{L_{\beta_k, \gamma_k}}{2}\|z_{k+1} - \hat z_k\|^2_{\gamma_k^{-1}} \notag\\
& = (1-\theta_k) F(z_k) + F^*_{\beta_k, M}(\hat z_k) + \theta_k \Big( F(\bar z_{k+1}) + \langle \nabla F^*_{\beta_k, M}(\hat z_k), \bar z_{k+1} - \bar z_k\rangle \Big)\notag \\
& \qquad\qquad+ \frac{L_{\beta_k,\gamma_k}\theta_k^2}{2}\|\bar z_{k+1} - \bar z_k\|^2_{\gamma_k^{-1}}\notag \\
& \leq  (1-\theta_k) F(z_k) + F^*_{\beta_k, M}(\hat z_k) + \theta_k \Big( F(z^*) + \langle \nabla F^*_{\beta_k, M}(\hat z_k), z^* - \bar z_k\rangle + \frac{\theta_k}{2}\|z^* - \bar z_k\|^2_{\gamma_k^{-1}}  \notag\\
& \qquad\qquad- \frac{\theta_k}{2}\|z^* - \bar z_{k+1}\|^2_{\gamma_k^{-1}} \Big)+ \big(\frac{L_{\beta_k,\gamma_k}\theta_k^2}{2} - \frac{\theta_k^2}{2} \big)\|\bar z_{k+1} - \bar z_k\|^2_{\gamma_k^{-1}} \notag\\
& =  (1-\theta_k) F(z_k) + \theta_k F(z^*) + \theta_k F^*_{\beta_k, M}(\hat z_k) + \theta_k \langle \nabla F^*_{\beta_k, M}(\hat z_k), z^* - \hat z_k\rangle \notag\\
&\qquad\qquad +(1-\theta_k)F^*_{\beta_k, M}(\hat z_k)+(1-\theta_k)\langle \nabla F^*_{\beta_k, M}(\hat z_k), z_k - \hat z_k\rangle  + \frac{\theta_k^2}{2}\|z^* - \bar z_k\|^2_{\gamma_k^{-1}}\notag\\
& \qquad\qquad - \frac{\theta_k^2}{2}\|z^* - \bar z_{k+1}\|^2_{\gamma_k^{-1}} + (L_{\beta_k,\gamma_k}-1)\frac{\theta_k^2}{2} \|\bar z_{k+1} - \bar z_k\|^2_{\gamma_k^{-1}}\notag \\
& \leq (1-\theta_k) F(z_k) + \theta_k F(z^*) + \theta_k F^*_{\beta_k, M}(z^*) + \frac{\theta_k}{2}\|z^* - \hat z_k\|^2_{\beta_k}\notag \\
&\qquad\qquad +(1-\theta_k)F^*_{\beta_k, M}(z_k)+\frac{1-\theta_k}{2}\|z_k - \hat z_k\|^2_{\beta_k}  + \frac{\theta_k^2}{2}\|z^* - \bar z_k\|^2_{\gamma_k^{-1}} - \frac{\theta_k^2}{2}\|z^* - \bar z_{k+1}\|^2_{\gamma_k^{-1}} \notag\\
& \qquad\qquad+ (L_{\beta_k, \gamma_k}-1)\frac{\theta_k^2}{2} \|\bar z_{k+1} - \bar z_k\|^2_{\gamma_k^{-1}}
\notag
\end{align}
We can work a bit the additional terms coming from weak convexity:
\begin{align*}
&\frac{ \theta_k}{2}\|z^* - \hat z_k\|^2_{\beta_k} + \frac{1-\theta_k}{2}\|z_k - \hat z_k\|^2_{\beta_k} \\
& = \frac{\theta_k(1-\theta_k)}{2}\|z^* - z_k\|^2_{\beta_k } + \frac{\theta_k^2}{2}\|z^* - \bar z_k\|^2_{\beta_k } - \frac{ \theta_k^2(1-\theta_k)}{2}\|\bar z_k - z_k\|^2_{\beta_k} + (1-\theta_k)\frac{\theta_k^2}{2}\|z_k - \bar z_k\|^2_{\beta_k} \\
& =  \frac{ \theta_k(1-\theta_k)}{2}\|z^* - z_k\|^2_{\beta_k} + \frac{\theta_k^2}{2}\|z^* - \bar z_k\|^2_{\beta_k }
\end{align*}
Moreover, 
\begin{align*}
&\frac{\bar a \beta_k}{2} \|z_{k+1} - z^*\|^2 \leq \frac{\bar a \beta_k}{2} (1-\theta_k) \|z_k - z^*\|^2 + \frac{\bar a \beta_k}{2}\theta_k\|\bar z_{k+1} - z^*\|^2 \\
&G_{\beta_k}(z_{k+1}) \geq \Big(2 - \max\big(\frac{\beta_{x,k}}{\beta_{x,k+1}}, \frac{\beta_{y,k}}{\beta_{y, k+1}}\big)\Big)G_{\beta_{k+1}}(z_{k+1})
\end{align*}
Combining the four formulas and using the fact that $G_{\beta_k}(z^*) = 0$ and $\beta_{x,k}/\beta_{x, k+1} = \beta_{y,k}/\beta_{y, k+1}$, we obtain the following Lyapunov-like inequality
\begin{align}
\big(2 - \frac{\beta_{x,k}}{\beta_{x,k+1}}\big)&G_{\beta_{k+1}}(z_{k+1}) +\frac{1}{2}\|z_{k+1} - z^*\|^2_{\beta_k} + \frac{\theta_k^2}{2}\|\bar z_{k+1} - z^*\|^2_{\gamma_k^{-1}} - \frac{\theta_k}{2}\|\bar z_{k+1} - z^*\|^2_{\beta_k} \notag\\
& \leq (1-\theta_k) G_{\beta_k}(z_k) + \frac{(1-\theta_k)(1+\theta_k)}{2}  \|z_k - z^*\|^2_{\beta_k} + \frac{\theta_k^2}{2}\|\bar z_k - z^*\|^2_{\gamma_k^{-1}}\notag\\
& \qquad \qquad + \frac{\theta_k^2}{2} \|\bar z_k - z^*\|^2_{\beta_k} + (L_{\beta_k, \gamma_k}-1)\frac{\theta_k^2}{2} \|\bar z_{k+1} - \bar z_k\|^2_{\gamma_k^{-1}}
\label{first_bound_on_Gbeta_for_acc}
\end{align}

We would like to choose $\gamma_k$ in such a way that $L_{\beta_k, \gamma_k} \leq 1$. 
This is possible with $\gamma_k$ such that
\begin{align*}
\gamma_{x,k} = \frac{\beta_{y,k}}{2\beta_{x,k}\beta_{y,k}+\|A\|^2}\quad \text{and} \quad \gamma_{y,k} = \frac{\beta_{x,k}}{2\beta_{y,k}\beta_{x,k}+\|A\|^2}\;.
\end{align*}

Given $\beta_{x,0} >0, \beta_{y,0}>0, b>0, t>0$, we choose 
\begin{align*}
& \beta_k = \frac{\beta_0 b}{k + b} \\
& \theta_k = \frac{t}{k + t}
\end{align*}
This choice of parameters ensures:
\begin{align*}
& 2 - \frac{\beta_{x,k}}{\beta_{x,k+1}} = 2 - \frac{k+b+1}{k+b} = \frac{k+b-1}{k+b} = \frac{\beta_{x,k}}{\beta_{x,k-1}} \\
&1-\theta_k = \frac{k}{k+t}
\end{align*}

\begin{align*}
&\frac{(1-\theta_k) \beta_{x,k-2}}{\beta_{x,k-1}} = \frac{k(k+b-1)}{(k+t)(k+b-2)}  \\
& \frac{\beta_{x,k}(1-\theta_k^2)}{\beta_{x,k-1}}\leq \frac{\beta_{x,k}}{\beta_{x,k-1}}=\frac{k+b-1}{k+b}
\end{align*}
\begin{align*}
\frac{\frac{\theta_k^2}{\gamma_{x,k}} + \beta_{x,k} \theta_k^2}{\frac{\theta_{k-1}^2}{\gamma_{x,k-1}}-\beta_{x,k-1}\theta_{k-1}} &= \frac{\theta_k^2 \|A\|^2/\beta_{y,k} + 3 \theta_k^2 \beta_{x,k}  }{\theta_{k-1}^2 \|A\|^2/ \beta_{y,k-1} + 2  \beta_{x,k-1} \theta_{k-1}^2- \beta_{x,k-1} \theta_{k-1}} \\
&= \frac{\theta_{k}^2\beta_{y,k-1}}{\theta_{k-1}^2\beta_{y,k}}\frac{\|A\|^2 + 3 \beta_{x,k}\beta_{y_k}}{\|A\|^2 + 2 \beta_{x,k-1}\beta_{y,k-1}(1-1/\theta_{k-1})}
\end{align*}
\begin{align*}
&\frac{\|A\|^2 + 3 \beta_{x,k}\beta_{y,k}}{\|A\|^2 + 2 \beta_{x,k-1}\beta_{y_{k-1}}(1-1/\theta_{k-1})} \leq 1 + \frac{\bar c}{k + c} \\
&\Leftrightarrow \|A\|^2 + 3 \beta_{x,k}\beta_{y,k} \leq \|A\|^2 + \frac{\bar c}{k + c} \|A\|^2 + (2 \beta_{x,k-1}\beta_{y,k-1}(1-1/\theta_{k-1})) \frac{k+c+\bar c}{k + c}\\
& \Leftrightarrow \frac{3\beta_{x,0}\beta_{y,0} b^2}{(k+b-1)^2} + \frac{\beta_{x,0}\beta_{y,0}b^2(k+t-1)(k+c+\bar c)}{t(k+b-1)^2(k+c)}\leq  \frac{\bar c}{k+c} \|A\|^2+ \frac{2\beta_{x,0}\beta_{y,0}b^2 (k+c+\bar c)}{(k+b-1)^2(k+c)} \\
& \Leftrightarrow 3\beta_{x,0}\beta_{y,0} b^2(k+c) +  \frac{\beta_{x,0}\beta_{y,0} b^2}{t} (k+t-1)(k+c+\bar c) \leq \bar c\|A\|^2 (k+b-1)^2 + 2 \beta_{x,0}\beta_{y,0}b^2(k+c+\bar c) 
\end{align*}
Taking
$\bar c = \frac{\beta_{x,0}\beta_{y,0} b^2}{t \|A\|^2}$ allows us to remove the quadratic term. There remains
\begin{align*}
&3\beta_{x,0}\beta_{y,0} b^2(k+c) +  \bar c \|A\|^2 (k(t-1+c+\bar c)+(t-1)(c+\bar c)) &\\
&\qquad\qquad\leq \bar c\|A\|^2 (2k(b-1)+(b-1)^2) + 2 \beta_{x,0}\beta_{y,0}b^2(k+c+\bar c) \\
&\Leftrightarrow k \Big(t\bar c \|A\|^2 + \bar c \|A\|^2 (t+c+\bar c-2b+1)\Big) + \Big(t\bar c \|A\|^2 (c-2\bar c)+ \bar c\|A\|^2((t-1)(c+\bar c)-(b-1)^2)\Big)\leq 0 \\
& \Leftrightarrow 2t+c+\bar c -2b - 1 \leq 0 \quad \text{and} \quad c-2\bar c + (t-1)(c+\bar c)-(b-1)^2 \leq 0 \\ 
& \Leftarrow c = \min(2b+1-2t-\bar c, \frac{(b-1)^2-(t-3)\bar c}{t})
\end{align*}
We can do exactly the same analysis when replacing $\beta_{x,k}$ by $\beta_{y,k}$ and $\gamma_{x,k}$ by $\gamma_{y,k}$.
We shall assume that the parameters are chosen in such a way that $\bar c = \frac{\beta_{x,0}\beta_{y,0} b^2}{t \|A\|^2} < 1$. Note moreover that when $b \geq t \geq 2$, then $c \geq 1$.

Then, denoting $\|\beta\|_\infty = \max(|\beta_x|, |\beta_y|)$ for $\beta \in \mathbb R^2$,
\begin{align*}
\prod_{k=1}^K &\Big\|\frac{\|A\|^2 + 3 \beta_k^2}{\|A\|^2 + 2 \beta_{k-1}^2-\beta_{k-1}^2/\theta_{k-1}}\Big\|_\infty \leq \prod_{k=1}^K (1 + \frac{\bar c}{k + c}) \leq \exp(\sum_{k=1}^K \ln(1+\bar c/(k+c)))  \\
&\leq \exp(\sum_{k=1}^K \frac{\bar c}{k+c}) 
\leq \exp(\bar c \ln(K+c)-\bar c\ln(c)) = \frac{(K+c)^{\bar c}}{c^{\bar c}} = (K/c+1)^{\bar c}
\end{align*}

We now go back to \eqref{first_bound_on_Gbeta_for_acc}. It can be rewritten
\begin{align*}
\frac{\beta_{x,k}}{\beta_{x,k-1}}G_{\beta_{k+1}}(z_{k+1}) & +\frac{1}{2}\|z_{k+1} - z^*\|^2_{\beta_k} + \frac 12\|\bar z_{k+1} - z^*\|^2_{\theta_k^2\gamma_k^{-1} -\beta_k\theta_k} \\
&\leq (1-\theta_k)\frac{\beta_{x,k-2}}{\beta_{x,k-1}}\frac{\beta_{x,k-1}}{\beta_{x,k-2}} G_{\beta_k}(z_k) + \frac{\beta_{x,k}}{\beta_{x,k-1}} (1-\theta_k^2) \frac{1}{2}\|z_k - z^*\|^2_{\beta_{k-1}}  \\
& \qquad\qquad+ \Big\|\frac{\frac{\theta_k^2}{2\gamma_k} +\frac{\beta_k\theta_k^2}{2}}{\frac{\theta_{k-1}^2}{2\gamma_{k-1}}-\frac{\beta_{k-1}\theta_{k-1}}{2}}\Big\|_\infty\frac 12 \|\bar z_k - z^*\|^2_{\theta_{k-1}^2 \gamma_{k-1}^{-1} - \beta_{k-1}\theta_{k-1}}
\end{align*}
Denote 
\begin{equation}
\mathcal L_k = \frac{\beta_{x,k-1}}{\beta_{x,k-2}}G_{\beta_{k}}(z_{k}) +\frac{1}{2}\|z_{k} - z^*\|^2_{\beta_{k-1}} + \frac 12 \|\bar z_k - z^*\|^2_{\theta_{k-1}^2 \gamma_{k-1}^{-1} - \beta_{k-1}\theta_{k-1}}
\label{eq:def_lyap_acc}
\end{equation}
 and
\begin{equation*}
\rho_k = \max\Big((1-\theta_k)\frac{\beta_{x,k-2}}{\beta_{x,k-1}}, \frac{\beta_{x,k}}{\beta_{x,k-1}}, \frac{\theta_k^2 \beta_{x,k-1}}{\theta_{k-1}^2 \beta_{x,k}} \frac{k+c+\bar c}{k+c} \Big)
\end{equation*}
then we have
\begin{equation*}
\mathcal L_{k+1} \leq \rho_k \mathcal L_k \;.
\end{equation*}
\begin{align*}
\rho_k &= \max\Big(\frac{k(k+b-1)}{(k+t)(k+b-2)}, \frac{k+b-1}{k+b}, \frac{(k+t-1)^2(k+b)(k+c+\bar c)}{(k+t)^2(k+b-1)(k+c)}\Big) \\
&\leq 
\max\Big(\frac{k(k+b-1)}{(k+t)(k+b-2)}, \frac{k+b-1}{k+b}, \frac{(k+t-1)^2(k+b)}{(k+t)^2(k+b-1)}\Big)\frac{k+c+\bar c}{k+c}
\end{align*}
In order to simplify the expression of $\rho_k$, we shall compare the three terms in the maximum.
We have:
\begin{align*}
&(t\geq 2, b\geq 2) \Rightarrow \frac{k(k+b-1)}{(k+t)(k+b-2)} \leq  \frac{k+b-1}{k+b} \\
& b \geq t \Leftrightarrow \frac{k+b-1}{k+b} \geq \frac{(k+t-1)^2(k+b)}{(k+t)^2(k+b-1)}
\end{align*}
Thus, in the regime $b \geq t \geq 2$, we get
$\rho_k \leq \frac{k+b-1}{k+b}\frac{k+c+\bar c}{k+c}$ and 
\begin{align*}
\prod_{k=1}^{K} \rho_k \leq \frac{b}{K+b} (K/c+1)^{\bar c} \;.
\end{align*}
We do a special treatment for $\mathcal L_1$, using the fact that $\theta_0 = 1$, as follows
\begin{align*}
\mathcal L_1 &= (2-\frac{\beta_{x,0}}{\beta_{x,1}})G_{\beta_1}(z_1) + \frac{1}{2}\|z_1-z^*\|^2_{\beta_0} + \frac 12 \|\bar z_1 - z\*\|^2_{\theta_0^2\gamma_0^{-1}-\beta_0\theta_0}\\
&\leq \frac 12 \|\bar z_0 - z^*\|^2_{\gamma_0^{-1}+\beta_0}=\frac 12 \|z_0 - z^*\|^2_{\gamma_0^{-1}+\beta_0}
\end{align*}
so that
\begin{align*}
G_{\beta_{K+1}}(z_{K+1}) &\leq \frac{\beta_{x,K-1}}{\beta_{x,K}}\mathcal L_{K+1} \leq \frac{K+b}{K-1+b}\frac{b(K/c+1)^{\bar c}}{K+b} \frac 12 \| z_0 - z^*\|^2_{\gamma_0^{-1}+\beta_0} \\
&\leq \frac{b(K/c+1)^{\bar c}}{K+b-1} \frac 12 \| z_0 - z^*\|^2_{\gamma_0^{-1}+\beta_0}
\end{align*}
Finally, since $c \geq 1$, we have $K/c+1 \leq K+1 \leq K+b-1$.
\end{proof}

\section{Restarted accelerated proximal gradient}

The third algorithm we present here is an adaptive restart of Algorithm~\ref{alg:acc_norestart}. When restarting, we set $\theta_k$ back to 1, $\beta_k$ back to $\beta_0$ and $\bar z_{k} = z_k$. Notably, the adaptive restart test is written directly in terms of the optimization objective: indeed, in the present case, we know that the optimal value is 0 and we can use this fact to our advantage.

\begin{algorithm}
	\begin{algorithmic}
	\STATE	$\bar z_0 = z_0 \in \mathcal Z$ \\
	\STATE $s = 0$, $k_s = 0$
	\FORALL{$k \in \mathbb N$}
	\WHILE{$G_{\beta_0}(z_k) > 2^{-s-1} G_{\beta_{0}}(z_0)$}
	\STATE $\hat z_{k} = (1-\theta_{k-k_s}) z_k + \theta_{k-k_s}\bar z_k$
	\STATE $\bar z_{k+1} = \prox_{\frac{\gamma_{k-k_s}}{\theta_{k-k_s}}, F}\Big(\bar z_k - \frac{\gamma_{k-k_s}}{\theta_{k-k_s}}.\nabla F^*_{\beta_{k-k_s}, M}(\hat z_k) \Big)$
	\STATE $z_{k+1} = (1-\theta_{k-k_s})z_k + \theta_{k-k_s} \bar z_{k+1}$
	\ENDWHILE
	\STATE $\bar z_k \leftarrow z_k$, $k_s \leftarrow k$, $s \leftarrow s+1$
	\ENDFOR
	\end{algorithmic}
	\caption{Accelerated proximal gradient descent for the smoothed gap with restart} \label{alg:acc}
\end{algorithm}

\begin{theorem}
If $G_{\beta_0}$ has an error bound with parameters $p \geq 2$ and $\eta>0$, that is if
$G_{\beta_0}(z) \geq \frac{\eta}{p}\dist(z, \mathcal Z_*)^p$, then
$G_{\beta_0}(z_{K})\leq \epsilon$ after
at most 
\begin{equation*}
K = \begin{cases} \lceil -\log_2(\epsilon) \rceil \times \Big \lceil 3-b + \Big(\dfrac{2b \|\gamma_0^{-1}+\beta_0\|_\infty}{\eta}\Big)^{\frac{1}{1-\bar c}}\Big\rceil & \text{ if } p = 2\\
\lceil - \log_2(\epsilon)\rceil (4-b) + \Big(\dfrac{b p^{2/p} \|\gamma_0^{-1}+\beta_0\|_\infty}{\eta^{2/p}G_{\beta_0}(z_0)^{1-2/p}}\Big)^{\frac{1}{1-\bar c}} \dfrac{4^\frac{1-2/p}{1-\bar c}\epsilon^{-\frac{1-2/p}{1-\bar c}}-1}{2^{\frac{1-2/p}{1-\bar c}}-1} & \text{ if } p > 2
\end{cases}
\end{equation*}
iterations.
\end{theorem}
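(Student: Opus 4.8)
The plan is to combine the per-epoch convergence rate of the non-restarted accelerated algorithm (Theorem~\ref{thm:acc}) with the error bound assumption and the geometric decay enforced by the restart test. The restart test guarantees that whenever epoch $s$ terminates, we have $G_{\beta_0}(z_{k}) \leq 2^{-s-1}G_{\beta_0}(z_0)$; hence after $S = \lceil -\log_2(\epsilon) \rceil$ successful restarts we reach $G_{\beta_0}(z_K) \leq 2^{-S}G_{\beta_0}(z_0) \leq \epsilon$ (up to normalizing by $G_{\beta_0}(z_0)$). So the total iteration count is the sum over epochs $s = 0, \ldots, S-1$ of the number of inner iterations needed in epoch $s$ to halve the gap, and the core task is to bound that inner length using Theorem~\ref{thm:acc} together with the error bound.

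First I would fix an epoch starting from a point $z_{k_s}$ with $G_{\beta_0}(z_{k_s}) \leq 2^{-s}G_{\beta_0}(z_0) =: \Delta_s$. Applying Theorem~\ref{thm:acc} with $z_0$ replaced by $z_{k_s}$ and running $n$ inner iterations gives
\begin{align*}
G_{\beta_n}(z_{k_s + n}) \leq \frac{b}{2(n+b-2)^{1-\bar c}}\|z_{k_s} - z^*\|^2_{\gamma_0^{-1}+\beta_0} \leq \frac{b\,\|\gamma_0^{-1}+\beta_0\|_\infty}{2(n+b-2)^{1-\bar c}}\operatorname{dist}(z_{k_s}, \mathcal Z_*)^2 \;.
\end{align*}
The error bound $G_{\beta_0}(z) \geq \frac{\eta}{p}\operatorname{dist}(z,\mathcal Z_*)^p$ converts the distance into the gap: $\operatorname{dist}(z_{k_s},\mathcal Z_*)^2 \leq \big(\frac{p}{\eta}\Delta_s\big)^{2/p}$. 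I would also need $G_{\beta_0}(z_{k_s+n}) \leq G_{\beta_n}(z_{k_s+n})$ (valid since $\beta_n \leq \beta_0$, e.g.\ via Lemma~\ref{lem:compare_gbetas} or monotonicity of $G_\beta$ in $\beta$) so that the restart test, phrased in terms of $G_{\beta_0}$, is actually triggered. Setting the right-hand side equal to the halving target $\Delta_s/2$ and solving for $n$ yields the number of inner iterations $n_s$ for epoch $s$, which scales like $\big(\frac{b\|\gamma_0^{-1}+\beta_0\|_\infty}{\eta^{2/p}\,\Delta_s^{1-2/p}}\big)^{1/(1-\bar c)}$ plus the additive $b$-dependent shift.

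The two cases $p=2$ and $p>2$ then separate according to the $s$-dependence of $n_s$. When $p = 2$ the exponent $1-2/p$ vanishes, so $\Delta_s^{1-2/p} = 1$ and $n_s$ is the same constant for every epoch; multiplying by the number of epochs $\lceil -\log_2(\epsilon)\rceil$ gives the first branch. When $p > 2$ the factor $\Delta_s^{-(1-2/p)} = (2^s/G_{\beta_0}(z_0))^{1-2/p}$ grows geometrically in $s$, so I would substitute $\Delta_s = 2^{-s}G_{\beta_0}(z_0)$, sum the resulting geometric series $\sum_{s} 2^{s(1-2/p)/(1-\bar c)}$ with ratio $2^{(1-2/p)/(1-\bar c)}$ over $s = 0, \ldots, S-1$, and add the additive per-epoch overhead; identifying $2^{-S} \approx \epsilon$ reproduces the closed-form with the $\frac{4^{\cdots}\epsilon^{-\cdots}-1}{2^{\cdots}-1}$ ratio. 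The main obstacle I anticipate is bookkeeping the ceilings and the additive $b$-shifts correctly: each epoch contributes an $\lceil 3-b + (\cdots)^{1/(1-\bar c)}\rceil$ term, and getting the geometric sum plus these integer rounding overheads to match the stated constants (the $3-b$, $4-b$, and the initial gap normalization $G_{\beta_0}(z_0)^{1-2/p}$ in the denominator) requires care rather than deep ideas. I would also double-check that the restart is well-defined, i.e.\ that the inner while-loop indeed terminates in finitely many steps, which follows precisely because the rate above drives $G_{\beta_0}$ below the threshold.
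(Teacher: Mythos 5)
Your proposal is correct and follows essentially the same route as the paper's proof: apply Theorem~\ref{thm:acc} epoch-by-epoch with $z^*$ taken as the projection of the restart point onto $\mathcal Z_*$, use monotonicity of $G_\beta$ in $\beta$ (so $G_{\beta_0} \leq G_{\beta_{k-k_s}}$) together with the error bound to turn the weighted-norm term into a power of the gap at the last restart, solve for the epoch length, and then sum a constant number of inner iterations per epoch when $p=2$ and a geometric series when $p>2$ over the $\lceil -\log_2(\epsilon)\rceil$ epochs. The paper phrases the epoch-length bound contrapositively (the restart test has not yet fired at iteration $k_{s+1}-1$) rather than by solving for $n$ directly, but this is the same argument, down to the integer bookkeeping and the implicit normalization by $G_{\beta_0}(z_0)$ that you also flagged.
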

\begin{proof}
Between restarts number $s$ and $s+1$, we can apply Theorem~\ref{thm:acc}. We just need to slide the indices. 
Thus because $\beta_0 \geq \beta_{k-k_s}$ and $G_{\beta_0}(z) \geq \frac{\eta}{p}\dist(z, \mathcal Z_*)^p$, 
we have for all $k \in \{k_s,\ldots, k_{s+1}\}$, and for $z^*$ being the projection of $z_{k_s}$ onto $\mathcal Z_*$,
\begin{align*}
G_{\beta_0}(z_{k}) &\leq G_{\beta_{k-k_s}}(z_{k}) \leq \frac{b}{2(k-k_s+b-2)^{1-\bar c}} \|z_{k_s} - z^*\|^2_{\gamma_{0}^{-1}+\beta_{0}}\\
&\leq \big(\frac{p}{\eta}\big)^{\frac 2 p}\|\gamma_0^{-1}+\beta_0\|_\infty\frac{b}{2(k-k_s+b-2)^{1-\bar c}}G_{\beta_0}(z_{k_s})^{\frac 2p}
\end{align*}
A restart occurs as soon as $G_{\beta_0}(z_{k_{s+1}}) \leq 2^{-s-1} G_{\beta_0}(z_0)$. This means that $G_{\beta_0}(z_{k_{s+1}-1}) > 2^{-s-1} G_{\beta_0}(z_0)$ and $G_{\beta_0}(z_{k_{s}}) \leq 2^{-s} G_{\beta_0}(z_0)$. Hence,
\begin{align*}
& \big(\frac{p}{\eta}\big)^{\frac 2 p}\|\gamma_0^{-1}+\beta_0\|_\infty\frac{b}{2(k_{s+1}-1-k_s+b-2)^{1-\bar c}} 2^{-\tfrac{2s}{p}}G_{\beta_0}(z_0)^{\tfrac{2}{p}} \geq 2^{-s-1} G_{\beta_0}(z_0)\\
&\big(\frac{p}{\eta}\big)^{\frac 2p}\|\gamma_0^{-1}+\beta_0\|_\infty\frac{b}{2(k_{s+1}-k_s+b-3)^{1-\bar c}} > 2^{-s-1}2^{\frac {2s}{p}} G_{\beta_0}(z_{0})^{1-\frac 2 p}\\
& k_{s+1}-k_s+b-3 < 2^{\frac{s(1-2/p)}{1-\bar c}} \Big(\frac{b p^{2/p} \|\gamma_0^{-1}+\beta_0\|_\infty}{\eta^{2/p}G_{\beta_0}(z_0)^{1-2/p}}\Big)^{\frac{1}{1-\bar c}}
\end{align*}
\begin{align*}
& k_{s} < s \Big \lceil 3-b + \Big(\frac{2b \|\gamma_0^{-1}+\beta_0\|_\infty}{\eta}\Big)^{\frac{1}{1-\bar c}}\Big\rceil  & \text{ if } p = 2 \\
& k_s < s (4-b) + \Big(\frac{b p^{2/p} \|\gamma_0^{-1}+\beta_0\|_\infty}{\eta^{2/p}G_{\beta_0}(z_0)^{1-2/p}}\Big)^{\frac{1}{1-\bar c}} \frac{2^{(s+1)\frac{1-2/p}{1-\bar c}}-1}{2^{\frac{1-2/p}{1-\bar c}}-1} & \text{ if } p > 2
\end{align*}
The $4-b$ is here to take care of integers.

Hence, to find a point $z$ such that $G_{\beta_0}(z)\leq \epsilon = 0.5^{-\log_2(\epsilon)}$, we need at most $-\log_2(\epsilon)$ restarts and thus at most $k_{\lceil -\log_2(\epsilon)\rceil}$
iterations.
We can then simplify the last power of $s$ using $2^{\lceil - \log_2(\epsilon) + 1\rceil c} \leq 2^{(- \log_2(\epsilon) + 2)c} = 4^c \epsilon^{-c}$.

%From the properties of the Lyapunov function \eqref{eq:def_lyap_acc},
%\begin{align*}
%\|z_{k_{s+1}}-z^*\|^2_{\gamma_0^{-1}+\beta_0} &\leq \|\frac{\gamma_0^{-1}+\beta_0}{\beta_{k_{s+1}-k_s-1}}\|_\infty \|z_{k_{s+1}}-z^*\|^2_{\beta_{k_{s+1}-k_s-1}} \\
%&\leq  \|\frac{\gamma_0^{-1}+\beta_0}{\beta_{k_{s+1}-k_s-1}}\|_\infty \frac{b}{(k_{s+1}-k_s+b-1)^{1-\bar c}} \|z_{k_s}-z^*\|^2_{\gamma_0^{-1}+\beta_0}\\
%& = \|\frac{\gamma_0^{-1}+\beta_0}{\beta_{0}}\|_\infty \frac{k_{s+1}-k_s-1+b}{b}\frac{b}{(k_{s+1}-k_s+b-1)^{1-\bar c}} \|z_{k_s}-z^*\|^2_{\gamma_0^{-1}+\beta_0}\\
%& = \|\gamma_0^{-1}\beta_0^{-1}+1\|_\infty (k_{s+1}-k_s+b-1)^{\bar c} \|z_{k_s}-z^*\|^2_{\gamma_0^{-1}+\beta_0}
%\end{align*}
%Now, for $k \in \{k_s, \ldots, k_{s+1}\}$,
%\begin{align*}
%G_{\beta_0}(z_{k}) &\leq G_{\beta_{k-k_s}}(z_{k}) \leq \frac{b}{2(k-k_s+b-2)^{1-\bar c}} \|z_{k_s} - z^*\|^2_{\gamma_{0}^{-1}+\beta_{0}}
%\end{align*}
%Which implies that 
%\begin{align*}
%k_{s+1} -k_s < 3-b+ \Big(\frac{2^s \|z_{k_s}-z^*\|^2_{\gamma_0^{-1}+\beta_0}}{G_{\beta_0}(z_0)}\Big)^{\frac{1}{1-\bar c}}
%\end{align*}
\end{proof}

\section{Numerical experiments}

\subsection{Toy linear program}

In the first experiment, we compare the algorithms developed in this paper (denoted as prox\_grad and acc\_prox\_grad in the legend) with PDHG \cite{chambolle2011first} (also know as the Chambolle-Pock algorithm) and its averaged and restarted version RA-PDHG \cite{fercoq2023quadratic}. This choice is motivated by the fact that all these algorithms use exactly the same primitives: proximal operators of the convex functions at stake and matrix-vector multiplications. We also tested an algorithm inspired by L-BFGS:
after each restart, we do a complete smoothing of the duality gap and run L-BFGS on the function
\[
G_{\beta, \delta}(z) := F_\delta(z; -\nabla F^*_{\beta,M}(z_k)) + F^*_{\beta,M}(z)
\]
where $\delta > 0$ and $F_\delta(z; \dot z) = \max_{z'} \langle z, z' \rangle - F^*(z') - \frac \delta 2 \|z' - \dot z\|^2$. This is  acc\_prox\_grad\_lbfgs in the legend. We can see that Proximal gradient on the smoothed gap (Algorithm~\ref{alg:prox_grad_smoothed_gap}) has a similar performance than PDHG while its accelerated and restarted version (Algorithm~\ref{alg:acc}) behaves like RAPDHG. Even if we have no proof for it, the L-BFGS extension presented aboves looks promising on this toy problem.

\begin{figure}
	\centering
	\includegraphics[width=0.7\linewidth]{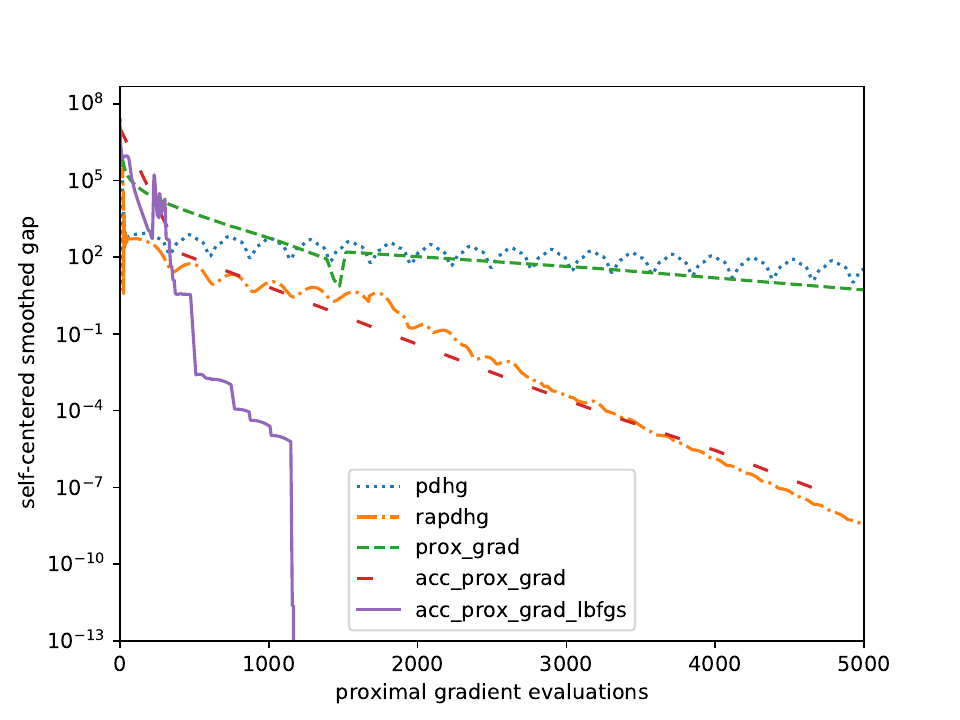}
	\caption{Comparison of algorithms on a toy linear program with 4 variables and 3 constraints}
	\label{fig:expe_toylp}
\end{figure}

\subsection{Second order cone program}

\begin{figure}
	\centering
	\includegraphics[width=0.7\linewidth]{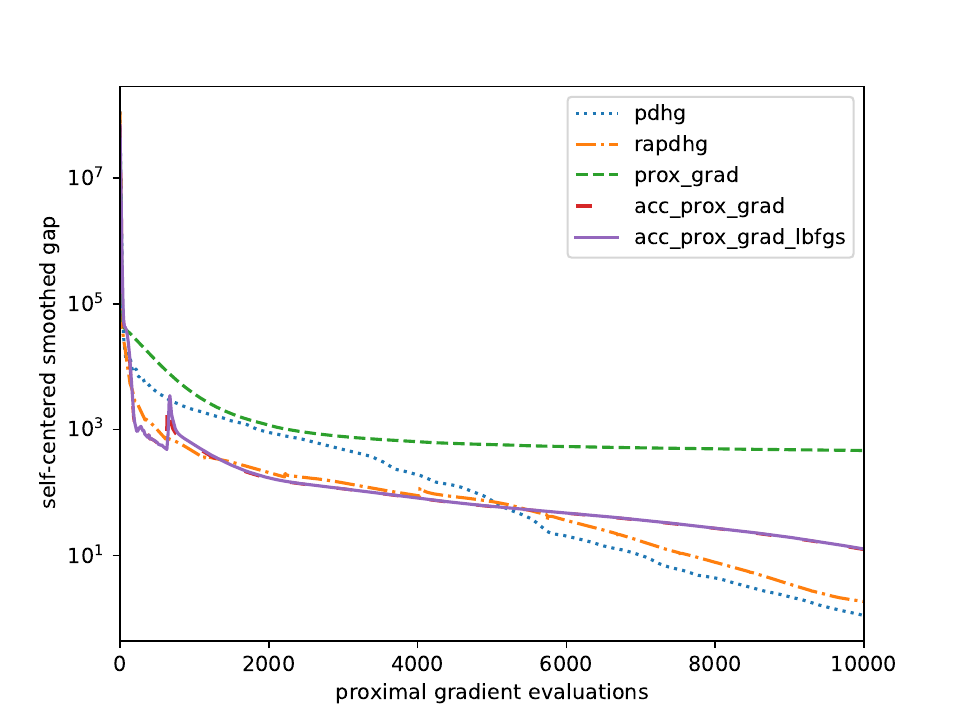}
	\caption{Comparison of algorithms on a second order cone program}
	\label{fig:expe_socp}
\end{figure}

To test our algorithms on a more realistic situation, we considered the
\texttt{qssp30} problem from \texttt{cblib.zib.de} \cite{friberg2016cblib}.
It is a second order cone program with 7,565 variables, 3,691 equality constraints and 1,891 3D second order cone constraints. We can see on Figure~\ref{fig:expe_socp} that on this problem, Algorithm~\ref{alg:prox_grad_smoothed_gap} makes very little progress. Yet, 
Algorithm~\ref{alg:acc} works better, and has similar performance than RAPDHG.
In that case, the L-BFGS extension did not add anything and both curves are nearly overlapping.

\section{Conclusion}

In this paper, we have shown how to reduce the search for a saddle point of a convex-concave function into the minimization of a weakly convex function. We then proposed three algorithms for this minimization problem, inspired by proximal gradient type methods. The algorithms we obtain have a similar theoretical worst case and practical performance than famous algorithms using the same primitives, namely Primal-Dual Hybrid Gradient and its restarted and averaged variant.

This work opens the route for the development of new primal-dual algorithms that use tools from function minimization and extend them to saddle point problems thanks to the self-centered smoothed duality gap. We showed a preliminary experiment on L-BFGS but other ideas include coordinate descent or line search. Another avenue of research could be non-convex non-concave saddle point problems where having a minimization problem to base on can help avoiding pathological situations like limit cycles~\cite{pethick2022escaping}.

\section*{Acknowledgment}

This work was supported by the Agence National de la Recherche grant ANR-20-CE40-0027, Optimal Primal-Dual Algorithms (APDO).

\section*{Declaration of AI-assisted technologies in the manuscript preparation process}
	
During the preparation of this work the author used ChatGPT in order to reformulate some sentences in the introduction, as well as a spell checker. After using these tools, the author reviewed and edited the content as needed and takes full responsibility for the content of the published article.

%idea for lbfgs version : GENERALIZED HESSIAN PROPERTIES
%OF REGULARIZED NONSMOOTH FUNCTIONS
%R. A. Poliquin and R. T. Rockafellar

%Alexander Bodard, Masoud Ahookhosh and Panagiotis Patrinos : Second order proximal gradient method for avoiding nonsmooth strict saddle points

\bibliographystyle{alpha}
\bibliography{../proxgrad_on_smoothed_gap/literature}

\end{document}